\numberwithin{equation}{section}
\newtheorem{theorem}{Theorem}[section]
\newtheorem{lemma}[theorem]{Lemma}
\newtheorem{corollary}[theorem]{Corollary}
\theoremstyle{definition}
\newtheorem{example}[theorem]{Example}
\newtheorem{problem}[theorem]{Problem}
\newtheorem{problems}[theorem]{Problems}
\newtheorem{remark}[theorem]{Remark}
\newtheorem*{remark*}{Remark}
\newtheorem*{acks}{Acknowledgements}
\theoremstyle{remark}
\newenvironment{romenumerate}{\begin{enumerate}
 }{\end{enumerate}}
\newcounter{oldenumi}
\newenvironment{romenumerateq}
{\setcounter{oldenumi}{\value{enumi}}
\begin{romenumerate} \setcounter{enumi}{\value{oldenumi}}}
{\end{romenumerate}}
\newcounter{thmenumerate}
\newenvironment{thmenumerate}
{\setcounter{thmenumerate}{0}%
 \def\item{\par
 \refstepcounter{thmenumerate}\textup{(\roman{thmenumerate})\enspace}}
}
{}
\newcounter{xenumerate}   
\newcommand{\refT}[1]{Theorem~\ref{#1}}
\newcommand{\refC}[1]{Corollary~\ref{#1}}
\newcommand{\refL}[1]{Lemma~\ref{#1}}
\newcommand{\refR}[1]{Remark~\ref{#1}}
\newcommand{\refS}[1]{Section~\ref{#1}}
\newcommand{\refSS}[1]{Section~\ref{#1}}
\newcommand{\refE}[1]{Example~\ref{#1}}
\newcommand{\refF}[1]{Figure~\ref{#1}}
\xdef\klockan{\the\count1.0\the\count255}
\xdef\klockan{\the\count1.\the\count255}\fi
\newcommand\nopf{\qed}   
\newcommand\set[1]{\ensuremath{\{#1\}}}
\newcommand\bigset[1]{\ensuremath{\bigl\{#1\bigr\}}}
\newcommand\bigpar[1]{\bigl(#1\bigr)}
\newcommand\Bigcpar[1]{\Bigl\{#1\Bigr\}}
\def\rompar(#1){\textup(#1\textup)}    
\def\xexp(#1){e^{#1}}
\newcommand\floor[1]{\lfloor#1\rfloor}
\newcommand\ntoo{\ensuremath{{n\to\infty}}}
\newcommand\iid{i.i.d.\spacefactor=1000}    
\newcommand\ie{i.e.\spacefactor=1000}
\newcommand\eg{e.g.\spacefactor=1000}
\newcommand\cf{cf.\spacefactor=1000}
\newcommand{\as}{a.s.\spacefactor=1000}
\newcommand{\aex}{a.e.\spacefactor=1000}
\newcommand\ii{\mathrm{i}}
\newcommand{\tend}{\longrightarrow}
\newcommand\asto{\overset{\mathrm{a.s.}}{\tend}}
\newcommand\eqd{\overset{\mathrm{d}}{=}}
\newcommand\bbR{\mathbb R}
\newcommand\bbC{\mathbb C}
\newcommand\bbN{\mathbb N}
\newcommand\bbT{\mathbb T}
\newcounter{CC} 
\newcounter{cc}
\newcommand\E{\operatorname{\mathbb E{}}}
\renewcommand\P{\operatorname{\mathbb P{}}}
\newcommand\supp{\operatorname{supp}}
\newcommand\ga{\alpha}
\newcommand\gd{\delta}
\newcommand\gf{\varphi}
\newcommand\gam{\gamma}
\newcommand\gG{\Gamma}
\newcommand\gl{\lambda}
\newcommand\go{\omega}
\newcommand\gs{\sigma}
\newcommand\gss{\sigma^2}
\newcommand\gth{\theta}
\newcommand\eps{\varepsilon}
\newcommand\cA{\mathcal A}
\newcommand\cB{\mathcal B}
\newcommand\cF{\mathcal F}
\newcommand\cI{\mathcal I}
\newcommand\cP{\mathcal P}
\newcommand\cS{{\mathcal S}}
\newcommand\cT{{\mathcal T}}
\newcommand\cU{{\mathcal U}}
\newcommand\ett[1]{\boldsymbol1[#1]} 
\def\[#1]{[\![#1]\!]}
\newcommand\qw{^{-1}}
\renewcommand{\=}{:=}
\newcommand\oi{[0,1]}
\newcommand\setoi{\set{0,1}}
\newcommand\U{\textsf{U}}
\newcommand\uoi{\U(0,1)}
\newcommand\dd{\,\textup{d}}
\newcommand{\Lovasz}{Lov\'asz}
\newcommand{\tind}{t_{\mathrm{ind}}}
\newcommand{\cuq}{\overline{\cU}}
\newcommand{\cuoo}{\cU_\infty}
\newcommand{\ci}{\cI}
\newcommand{\ciq}{\overline{\ci}}
\newcommand{\cioo}{\cI_\infty}
\newcommand{\dcut}{\gd_\square}
\newcommand{\exch}{exchangeable}
\newcommand{\gwx}[1]{G(#1,W)}
\newcommand{\gwn}{\gwx{n}}
\newcommand{\gn}{G_{n}}
\newcommand{\gmux}[1]{G(#1,\mu)}
\newcommand{\gmun}{\gmux{n}}
\newcommand{\ggw}{\gG_W}
\newcommand{\ggmu}{\gG_\mu}
\newcommand{\ig}{interval graph}
\newcommand{\ps}{\cP(\cS)}
\newcommand{\psl}{\cP_L(\cS)}
\newcommand{\psr}{\cP_R(\cS)}
\newcommand{\psm}{\cP_m(\cS)}
\newcommand{\mul}{\mu_L}
\newcommand{\mur}{\mu_R}
\newcommand{\nj}{_{ni}}
\newcommand{\ch}{\check}
\newcommand{\sss}{\cS}
\newcommand{\sssq}{{\cS^2}}
\newcommand\sfmu{\ensuremath{(\cS,\cF,\mu)}}
\newcommand{\tWx}[1]{W_{#1}^{\gam_{#1}}}
\newcommand{\bd}{\bar d}
\newcommand{\aig}{$\cA$-intersection graph}
\newcommand{\aigs}{$\cA$-intersection graphs}
\newcommand{\aigl}{$\cA$-intersection graph limit}
\newcommand{\sca}{\sss_{\mathsf{CA}}}
\newcommand{\scao}{\sss_{\mathsf{CA}}^0}
\newcommand{\scg}{\sss_{\mathsf{CG}}}
\newcommand{\scgo}{\sss_{\mathsf{CG}}^0}
\newcommand\oivalued{$0/1$-valued} 
\newcommand\REM[1]{{\raggedright\texttt{[#1]}\par\marginal{XXX}}}
\newcommand\urladdrx[1]{{\urladdr{\def~{{\tiny$\sim$}}#1}}}
\begin{document}
\title{Interval graph limits}
\date{February 11, 2011}

\author{Persi Diaconis}
\address{Department of Mathematics,
Stanford University, 
CA 94305, USA}
\email{diaconis@math.stanford.edu}
\urladdrx{http://www-stat.stanford.edu/~CGATES/persi}

\author{Susan Holmes}
\address{Department of Statistics,
Stanford, CA 94305, USA}
\email{susan@stat.stanford.edu}
\urladdrx{http://www-stat.stanford.edu/~susan/}

\author{Svante Janson}
\address{Department of Mathematics, Uppsala University, PO Box 480,
SE-751~06 Uppsala, Sweden}
\email{svante.janson@math.uu.se}
\urladdrx{http://www.math.uu.se/~svante/}

\subjclass[2000]{05C99} 

\begin{abstract} 
We work out the graph limit theory for dense interval graphs.
The theory developed departs from the usual description of a graph limit
as a symmetric function $W(x,y)$
 on the unit square, with $x$ and $y$ uniform on the interval $(0,1)$. 
 Instead, we fix a $W$ and change the underlying distribution of the
 coordinates $x$ and $y$. We find choices such that our limits are
 continuous. 
Connections to random interval graphs are given, including some examples.
We also show a continuity result for the chromatic number and clique number of
interval graphs.
Some results on uniqueness of the limit description are
 given for general graph limits. 
\end{abstract}

\maketitle

\section{Introduction}\label{S:intro}

A graph $G$ is an \emph{\ig} if
there exists a collection of intervals $\set{I_i}_{i\in V(G)}$ 
such that
there is an edge $ij\in E(G)$ if and only if $I_i\cap I_j\neq0$, for all
pairs $(i,j)\in V(G)^2$ with $i\neq j$.
\begin{example}
Figure 1({\sc a}) 
shows published confidence intervals for the astronomical unit
(roughly the length of the semi-major axis of the earth's elliptical orbit about the sun).
Figure 1({\sc b}) 
shows the corresponding interval graph (data from \citet{Youden}).\\
\begin{figure}
  \centering
  \vskip-1.2cm
  \subfloat[Confidence Intervals]{\label{fig:youdeni}\includegraphics[width=0.5\textwidth,height= 6.6cm]{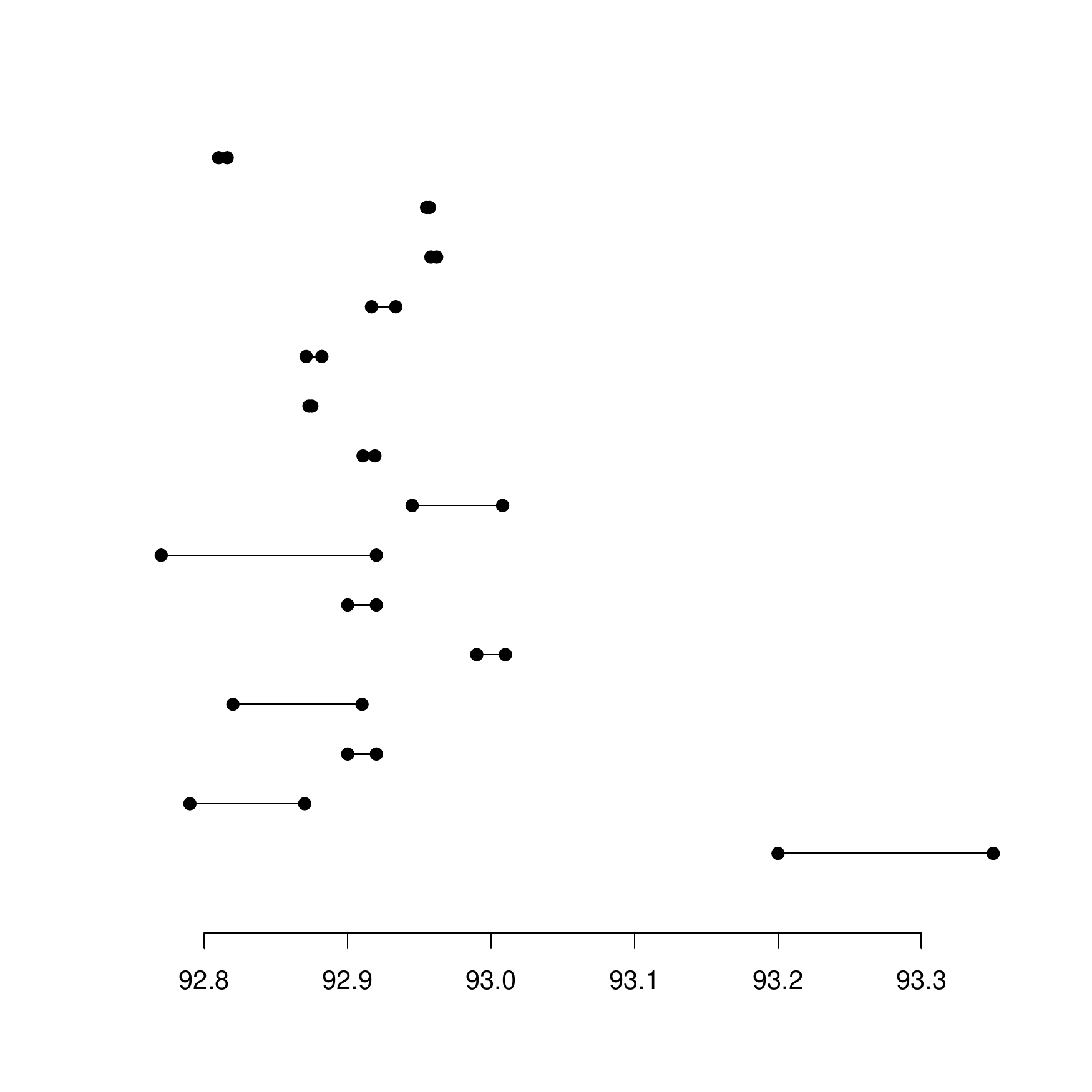}}                
  \subfloat[Interval Graph]{\label{fig:youdeng}\includegraphics[width=0.5\textwidth,height= 6.6cm]{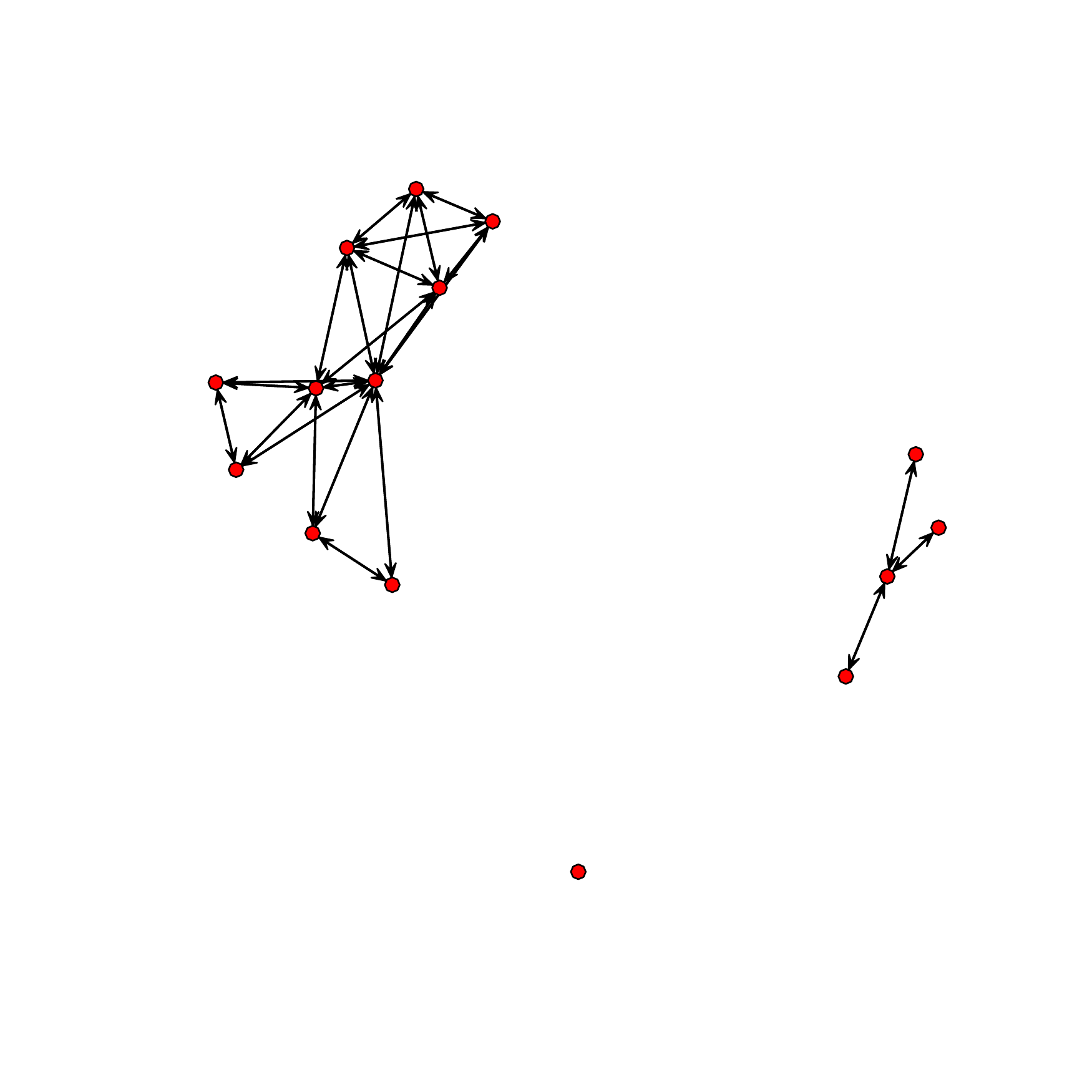}}
  \caption{Building the interval graph for the Youden astronomical constant
	confidence intervals \cite{Youden}. }
  \label{fig:youden}
\end{figure}
It is surprising how many missing edges there are in this graph as these correspond 
to disjoint confidence intervals for this basic unit of astronomy.
Even in the large component, the biggest clique
only has size $4$.
\end{example}
The literature on interval graphs and further examples are given in
\refSS{SS2.1}--\ref{SS2.3} below. 
\refSS{SS2.4} reviews the emerging literature on graph
limits. Roughly, a sequence of graphs $G_n$ is said to converge if the
proportion of edges, triangles and other small subgraphs tends to a
limit. The limiting object is not usually a graph but is represented 
as a symmetric function $W(s,t)$ and a probability measure $\mu$ on a space 
$\cS$.
Again roughly $W(s,t)$ is the chance 
that the limiting graph has an edge from $s$ to $t$, more details will be
provided in \refSS{SS2.4}.  

The main results in this paper combine these two sets of ideas and 
work out the graph limit theory for
interval graphs. 
The intervals in the definition above may be arbitrary intervals of
real numbers $[ a,b ]$, that without loss can be considered inside
$\oi$.
Thus an interval can be identified with a point in 
the triangle $\cS\=\set{[a,b]:0\le a\le b\le 1}$,
see Figure \ref{regionI-II}. 
An interval graph $G_n$ is defined by a set of intervals 
$\set{[a_i,b_i]}_{i=1}^n$, which
may be identified with the empirical
measure $\mu_n=\frac{1}{n}\sum \delta_{(a_i,b_i)}$. 
In \refS{Smain} we show that a sequence of graphs $G_n$
converges 
if the empirical measures $\mu_n$ converge to a limiting probability
$\mu$ in the usual weak star topology, provided $\mu$ satisfies a technical 
condition which we show may be assumed.
The limit of the graphs is specified by a function $W$
defined by
$$
W(a,b;a',b')\=
\begin{cases}
1 & \mbox{ if }[a_,b] \cap [a',b'] \neq \emptyset \\
0 & \mbox{otherwise}\\
\end{cases}
\qquad \mbox{and the limiting } \mu.$$ 
We thus fix $W$ and and simply vary $\mu$ with $\mu$ 
specifying the graph limit; this gives all interval graph limits, but note
that several $\mu$ may give the same graph limit. 
With a na\"ive choice of $\mu$, the assignment of $\mu$ to a graph limit is
not usually 
continuous (as a map from probabilities on $\sss$ to graph limits).
We show that there are several natural choices
of $\mu$ that lead to the same graph limit and result in continuous
assignments.  

\begin{figure}[ht]
\vskip-4cm
\begin{center}
\includegraphics[width=5in]{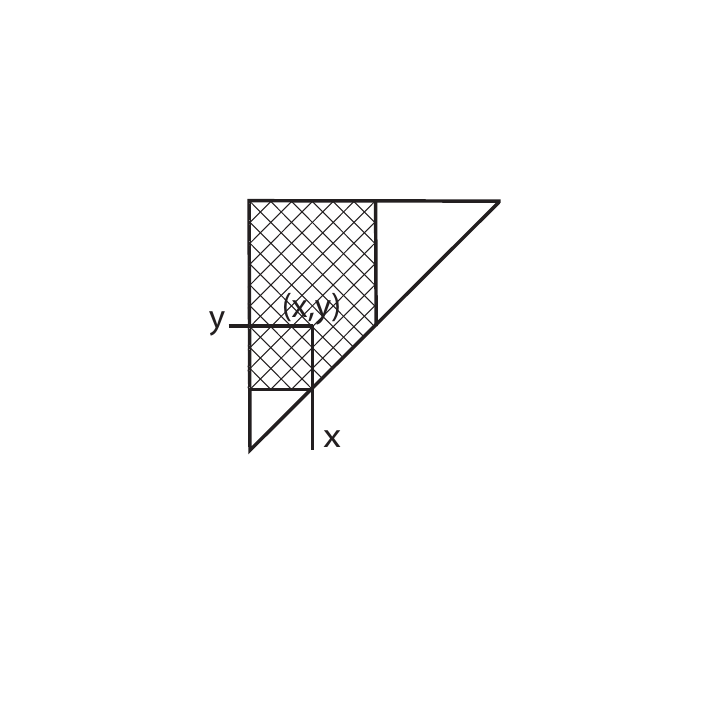}
\vskip-4cm
\caption{For a given $(x,y)$ in $\cS$, the relevant $(x',y')$ that will give an edge
in the intersection graph are in the hatched area.}
\label{regionI-II}
\end{center}
\end{figure}

 The main theorem is stated in Section \ref{Smain}, and 
results on the chromatic number and clique number are given in \refS{Sclique}.
Some important preliminaries on continuity of the mapping
$\mu\mapsto\ggmu$ are dealt with in \refS{Scont}, and 
\refS{Spf} gives the proof of the main theorem. 
Section
 \ref{Sexamples} discusses some examples of interval graph limits and the
 corresponding random interval graphs.
The parametrization of graph limits is highly non unique; this is seen in
some of the examples in \refS{Sexamples}.
Section
\ref{Sunique} gives a portemanteau theorem which clarifies the connections
between various uniqueness results. This is developed for the general case,
not just interval graphs. 
The problem
of finding a unique ``canonical'' representing measure is still open
in general.
\refS{Spfgo} gives the proofs of the results on clique numbers.
Finally, \refS{Sother} discusses extensions to other classes of intersection
graphs, in particular circular-arc graphs, circle graphs, permutation graphs 
and unit interval graphs.

\section{Background}
This section gives background and references, treating
  interval graphs in Sections  \ref{SS2.1} and \ref{SS2.2}, 
random interval graphs in \refSS{SS2.3}
and   graph limits in Section \ref{SS2.4}--\ref{SSdegrees}.

\subsection{Interval Graphs}\label{SS2.1}
Interval graphs and the closely associated subject of interval orders are a
standard topic in combinatorics. A book length treatment of the subject is
given by \citet{Fishburn}. 
Among many other results, we mention that interval graphs are perfect 
graphs, \ie, the chromatic number equals the size of the largest clique (for
the graph and all induced subgraphs).

Interval graphs are a special case of intersection graphs; 
more generally, we may consider a collection $\cA$ of subsets of some
universe and the class of graphs that can be defined
by replacing intervals by elements of $\cA$ in the definition above.
(We may call such graphs \aigs.)

\citet{McKee}'s book on Intersection Graphs establishes the relation with
intervals. Further literature on the connections between these various graph
classes is in  \citet{Brand} and \citet{Golumbic}.

\subsection{Applications of Interval Graphs}\label{SS2.2}
 The original question for which interval graphs saw their first application
 was in the structure of genetic DNA. Waterman and Griggs \cite{Waterman}
 and \citet{Klee}
 cite Benzer's original paper from 1959 \cite{Benzer}.
  This is also developed in the papers by Karp \cite{Karp}
 and \citet{GolumbicKS}.
Interval graphs are 
used for censored and truncated data; the interval indicating for instance observed
lifetime (see \citet{Gentleman}
and the R packages
{\tt MLEcens} and {\tt lcens}). 
They also come in when restricting data - like permutations - to certain
observable intervals, this was the motivation behind the astrophysics paper
\citet{Petrosian} and the followup paper
\citet{DiaconisGrahamHolmes}.
For an application of rectangle intersections, see
\citet{rim2002rectangle} and for
sphere intersections see \citet{ghrist2008barcodes}.

\subsection{Random Interval Graphs}\label{SS2.3}
A natural model of {\em random}
interval graphs has $[a_i,b_i]$ chosen uniformly at random inside \oi. 
\citet{Scheinerman:1988} shows that
\begin{align}
\#\text{edges}&=\frac{n^2}{3} + o_p(n^2),\label{1988e}\\
\P\Bigcpar{\frac{\min_v \deg(v)}{\sqrt{n}} \leq x } 
 &\longrightarrow 1-e^{-x^2/2},
\qquad x>0, \label{1988min}
\end{align}
and, if $v$ is a fixed vertex,
\begin{equation} \label{1988d}
  \P\Bigcpar{\frac{\deg(v)}{n} \leq x } \rightarrow 
\begin{cases}
 1-(1-x)\frac{\pi}{2}, & x \geq \frac{1}{2};\\
  1-(1-x)\left\{
  \frac{\pi}{2} -2\cos^{-1}[\frac{1}{\sqrt{2-2x}}]
  \right\}-\sqrt{1-2x},
    & x < \frac{1}{2}.\\
\end{cases}
\end{equation}
He further shows that most such graphs are connected, indeed Hamiltonian,
the chromatic number 
is $\frac{n}{2}+o_p(n)$
and several other things;
see also \citet{JSW} where it is shown that 
the maximum degree is $n-1$ with probability exactly $2/3$
for any $n>1$.
 The chromatic number equals, as said in \refSS{SS2.1},
the size of the largest clique, and this is equivalent
to the random sock sorting problem studied by 
Steinsaltz \cite{Steinsaltz}  and \citet{SJ198}
where more refined results are shown, including asymptotic normality which
for the random interval graph $G_n$ considered here can be written
$(\chi(G_n)-n/2)/\sqrt n\to N(0,1/4)$.

We connect this random interval graph to graph limits in \refE{EU},
where also other models of random interval graphs are considered. 

There has been some followup on this work with \citet{Scheinerman:1990}
introducing
an evolving family of models, and
\citet{Godehardt}
studying independence numbers of random interval graphs for cluster discovery.
\citet{Pippenger} has studied other models with application to allocation in multi-sever queues.
Here, customers  arrive according to a Poisson process, the service time distribution determines an interval length distribution and the intervals,
falling into a given window give an interval graph. For natural models, those graphs
are sparse, in contrast to our present study of dense graphs.

 Finally we give a pointer to an emerging literature on random intersection graphs where subsets of size $d$ 
from a finite set are chosen uniformly for each vertex and there is an edge between two vertices
if the subsets
have a non empty intersection. See results and references in 
\cite{Scheinerman:1999} and 
\cite{Stark}.

%
%

%

\subsection{Graph Limits}\label{SS2.4}
This paper studies limits of interval graphs, using the theory of
graph limits introduced by
\citet{LSz} and further developed in
\citet{BCLSVi,BCLSVii} and other papers by various combinations of these
and other authors;
see also \citet{Austin} and \citet{DJ}.
We refer to these papers for the detailed definitions, which may be
summarized as follows
(using the notation of \cite{DJ}).

If $F$ and $G$ are two graphs, then 
$t(F,G)$ denotes the probability that a random mapping $\phi:V(F)\to V(G)$
defines a graph homomorphism, \ie, that $\phi(v)\phi(w)\in E(G)$ when
$vw\in E(F)$. (By a random mapping we mean a mapping uniformly chosen
among all $|G|^{|F|}$ possible ones; the images of the vertices in
$F$ are thus independent and uniformly distributed over $V(G)$, \ie,
they are obtained by random sampling with replacement.)
The basic definition is that a sequence $G_n$ of graphs
converges if $t(F,G_n)$ converges for every graph $F$;
we will use the version in \cite{DJ} where we further assume
$|G_n|\to\infty$.
More precisely, 
the (countable and discrete) set $\cU$
of all unlabeled graphs can be embedded in a compact metric space
$\cuq$ such that a sequence 
$G_n\in\cU$ of graphs with $|G_n|\to\infty$
converges in $\cuq$ to some limit $\gG\in\cuq$ if and only if $t(F,G_n)$
converges for every graph $F$. Let $\cuoo\=\cuq\setminus \cU$ be the
set of proper graph limits.
The functionals $t(F,\cdot)$
extend to continuous functions on $\cuq$, 
and an element $\gG\in\cuoo$ is determined by the numbers $t(F,\gG)$.
Hence,
$G_n\to\gG\in\cuoo$ if and
only if $|G_n|\to\infty$ and
$t(F,G_n)\to t(F,\gG)$ for every graph $F$.
(See \cite{BCLSVi,BCLSVii} for other, equivalent, characterizations of
$G_n\to\gG$.) 

We say that a graph limit $\gG\in\cuoo$ is an \emph{interval graph
  limit} if $G_n\to\gG$ for some sequence of interval graphs.
The purpose of the present paper is to study this class of graph
  limits.

  \begin{remark}\label{Rthreshold}
In  \citet{DHJ}, the corresponding problem for the class
  $\cT$ of
  threshold graphs is studied. 
Recall that a graph is a threshold graph 
  \cite{MP}
  if there are real valued vertex labels $v_i$ 
and a threshold $t$ such that $(i,j)$ is an edge
if and only if
$v_i+v_j\leq t$. Equivalently, the graph can be built up sequentially by adding vertices which are either dominating (connected to all previous vertices) or isolated (disjoint from all previous vertices).
Threshold graphs are a subclass of interval graphs; this can be seen from
the sequential description by choosing a sequence of intervals overlapping
all previous or disjoint from all previous intervals as required.  
Thus every threshold graph limit is an interval graph limit. 
The description of threshold
  graph limits in \cite{DHJ} uses special properties of threshold
  graphs, and is of a somewhat different type than the descriptions of
  interval graph limits in the present paper.	
Thus, a threshold graph limit may be represented both as in \cite{DHJ} and
as in the present paper, and the representations will not be the same. (This is
nothing strange, since the representations typically are non-unique.) 
  \end{remark}

Let $\ci\subset\cU$ be the set of all interval graphs, and let
$\cioo\subset\cuoo$ be the set of all interval graph limits;
further, let $\ciq\subset\cuq$ be the closure of $\ci$ in $\cuq$. Then
$\cioo=\ciq\cap\cuoo=\ciq\setminus\ci$.
Clearly, $\cioo$ is a closed subset of $\cuoo$ and thus a compact
metric space.

A graph limit $\gG\in\cuoo$ may be represented as follows \cite{LSz},
see also \cite{DJ, Austin} for connections to the Aldous--Hoover
representation theory for exchangeable arrays \cite{Kallenberg:exch}.
Let $(\cS,\mu)$ be an arbitrary probability space and let
$W:\cS\times\cS\to\oi$ be a symmetric measurable function.
($W$ is sometimes called \emph{graphon}  \cite{BCLSVi,BCLSVii}, we will use
the alternative \emph{kernel} denomination \cite{SJ253}.) 
Let $X_1,X_2,\dots,$ be an \iid{} sequence of random elements of $\cS$
with common distribution $\mu$. Then there is a (unique) graph limit
$\gG\in\cuoo$ with, for every graph $F$, 
\begin{equation}\label{t}
  \begin{split}
  t(F,\gG)
&=
\E \prod_{ij\in E(F)} W(X_i,X_j) 
\\
&=
\int_{\cS^{|F|}} \prod_{ij\in E(F)} W(x_i,x_j) 
 \dd\mu(x_1)\dotsm \dd\mu(x_{|F|}).	
  \end{split}
\end{equation}
Further, let, for every $n\ge1$,
$G(n,W,\mu)$ 
be the random graph
obtained by
first taking random $X_1,X_2,\dots, X_n$, and then,
conditionally given $X_1,X_2,\allowbreak\dots,\allowbreak X_n$, 
for each pair $(i,j)$ with $i<j$ letting the edge 
$ij$ appear with probability $W(X_i,X_j)$, (conditionally)
independently for
all pairs $(i,j)$ with $i<j$.
Then the random graph $\gn=G(n,W,\mu)$ converges to $\gG$ \as{} as \ntoo.

Conversely, every graph limit $\gG\in\cuoo$ can be represented in this
way by some such $(\cS,\mu)$ and $W$. (The representation is not
unique, see \refS{Sunique}.)

 \begin{remark}\label{Rcopies}
For any random graph $G(n,W,\mu)$ (not just interval graphs)
the number of copies of any fixed subgraph (e.g.\  triangles)
is a U-statistic, perhaps with extra randomization if $W$ takes on
values other than 0 or 1. Thus central limit theorems
with error estimates and correction terms as well as large deviation
results are available.
\end{remark}

It is usually convenient to fix $(\cS,\mu)$ and let $W:\cS^2\to\oi$
vary; the standard choice of $(\cS,\mu)$ is
the unit interval $\oi$ with Lebesgue measure $\gl$.
(Every graph limit can be represented as in \eqref{t} using this space.)
For interval graphs, however, we find it more natural and convenient
to instead fix $\cS$ and $W$ as follows, and let $\mu$ vary.

There is some flexibility in the definition above of interval
graphs. The intervals in the definition may be arbitrary intervals of
real numbers, or more generally intervals in any totally ordered set,
but we may without changing the class of interval graphs restrict the
intervals to be, for example, closed. 
We may also suppose that all intervals are subsets of $\oi$.
It may sometimes be convenient to allow an empty interval $\emptyset$
(for isolated vertices), but we find it more convenient 
(at least notationally) 
to abstain from this and consider non-empty intervals only.
We will, however, allow ``intervals'' $[a,a]=\set a$ of length 0.

Consequently,  from now and throughout the paper (except where stated
otherwise) we let
$\cS\=\set{[a,b]:0\le a\le b\le 1}$ be the set of closed subintervals
of $\oi$ (non-empty, but allowing intervals of length 0).
$\cS$ is naturally identified with a closed triangle in the plane, and
is thus a compact metric space. (It is the compactness that makes this
space better for our purposes than, for example, the space of all
closed intervals in $\bbR$.)
Further, we let from now on
$W:\cS\times\cS\to\setoi$ be the function
\begin{equation}\label{w}
  W(I,J)=\ett{I\cap J\neq\emptyset}.
\end{equation}
Then, a graph $G=(V,E)$ is an interval
graph if and only if there exist intervals $I_v\in\cS$, $v\in V$, such
that the edge indicators $\ett{vw\in E}=W(I_v,I_w)$, $v\neq w$.

Every probability measure $\mu$ on $\cS$  defines a graph limit
$\gG\in\cuoo$ by \eqref{t}; we denote this graph limit by $\ggmu$.
Similarly, we denote the random graph $G(n,W,\mu)$ constructed from
$(\cS,\mu)$ and $W$ by $\gmun$; this is simply the random interval graph
defined by a random \iid{} sequence of intervals $X_1,X_2,\dots,X_n$ with
distribution $\mu$; we further allow $n=\infty$ here, and let
$G(\infty,\mu)$ be the random infinite graph defined in the same way
by $X_1,X_2,\dots$.
(In \cite{DJ}, the standard situation when $\cS$ and $\mu$ are fixed,
we instead use the notations $\ggw$ and $\gwn$; we will also use that
notation when we discuss general functions $W$ again in \refS{Sunique}.)
Hence, by the general results quoted above,
$\gmun\to\ggmu$ \as{} as \ntoo.
In particular, $\ggmu$ is an interval graph limit: $\ggmu\in\cioo$ for
every probability measure $\mu$ on $\cS$.

\begin{remark}\label{Rghost}
 A graph $G\in\cU$ corresponds to a 'ghost' $\gG_G\in\cuoo$ with
 $t(F,\gG_G)=t(F,G)$ for all $F$ \cite{LSz,DJ}. 
If $G$ is an interval graph represented by a sequence $I_1,\dots,I_n$
 of intervals in $\cS$ (with $n=|G|$), then it follows easily from
 \eqref{t} that
$\gG_G=\ggmu$, where $\mu=\frac1n\sum_1^n \gd_{I_i}$
is the distribution of a random interval chosen uniformly from $I_1,\dots,I_n$.
\end{remark}

Our main theorem (\refT{TI})
gives a converse: every interval graph limit can be
represented by a probability $\mu$ on $\cS$; moreover, we may impose a
normalization. (In fact, we have a choice between three different
normalizations.) However, even with one of these normalizations, the
representing measure is not always unique.

\begin{remark}
  For every measure $\mu$ on $\sss$ we thus have a model $G(n,\mu)$
  of random interval graphs. Different measures $\mu$ give the
  same model (i.e., with the same distribution for every $n$)
if and only if they give the same graph limit $\gG_\mu$, see \refS{Sunique}.
We may thus construct a large number of different models 
of random interval graphs in this way. 
We give a few examples in \refS{Sexamples}.
\end{remark}

\subsection{Degree distribution}\label{SSdegrees}

Suppose that $G_n$ is a sequence of graphs with, for convenience, $|G_n|=n$,
such that $G_n\to\gG$ for a graph limit $\gG$ which is represented by a
kernel $W$ on a probability space $(\sss,\mu)$. 
(In this subsection $W$ and $\sss$ may be arbitrary.)
Let $\bd(G_n)=2e(G_n)/n$
be the average degree of $G_n$. It follows immediately 
$\bd(G_n)/n$ converges to the average 
$\int_\sssq W(x_1,x_2)\dd\mu(x_1)\dd\mu(x_2)$;
in fact, $\bd(G_n)/n=2e(G_n)/n^2=t(K_2,G_n)\to t(K_2,\gG)=\int_\sssq W$.
(Equivalently, the edge density $e(G_n)/\binom n2 \to \int_\sssq W$.)

Moreover, let $\nu(G_n)$ be the normalized degree distribution of $G_n$,
defined as the distribution of the random variable $d_i/n$, where $i$ is a
uniformly random vertex in $G_n$ and $d_i$ its degree. 
Then $\nu(G_n)$ converges weakly (as a probability measure on $\oi$)
to the distribution of the random variable 
$W_1(X)\=\int_\sss W(X,z)\dd\mu(z)$,
where $X$ is a random element of $\sss$ with distribution $\mu$; 
note that $W_1(X)\in\oi$ and that its mean is $\int_\sssq W$.
We can thus regard the distribution of this random variable $W_1(X)$
as the degree distribution of the graph limit; we denote it by $\nu(\gG)$ or
(in our case, where $W$ is fixed) $\nu(\mu)$. 
See for example \cite{DHJ}. 

In particular, for any given $\mu$ on our standard $\sss$, this applies
a.s.\ to the random interval graphs 
$G(n,\mu)$, since $G(n,\mu)\to \gG_\mu$ as said above.

\section{Interval Graph Limits, Theorems}\label{Smain}

Let $\ps$ be the set of probability measures on 
$\cS\=\set{[a,b]:0\le a\le b\le 1}$, equipped with
the standard topology of weak convergence, which makes $\ps$ a compact
metric space.
If $\mu\in\ps$, let $\mul$ and $\mur$ be the marginals of $\mu$
(regarding $\cS$ as a subset of $\bbR^2$), \ie, the probability
measures on $\oi$ induced by $\mu$ and the mappings $\cS\to\oi$
given by $[a,b]\mapsto a$ and $[a,b]\mapsto b$, respectively.

We further consider, both 
as normalizations and
for reasons of continuity, see \refC{Ccont} below,
three subsets of $\ps$: (as above, $\gl$ denotes Lebesgue measure,
\ie, the uniform distribution)
\begin{align}
\psl&\=\set{\mu\in\ps:\mul=\gl},
\label{psl}
\\
\psr&\=\set{\mu\in\ps:\mur=\gl},
\label{psr}
\\
\psm&\=\set{\mu\in\ps:\tfrac12(\mul+\mur)=\gl}.
\label{psm}
\end{align}

We have the following result, which is proved in \refS{Spf}.

\begin{theorem}
  \label{TI}
$\cioo=\set{\ggmu:\mu\in\ps}$.
Moreover, every $\gG\in\cioo$ may be represented as $\ggmu$ where we
further may impose any one of the  normalization conditions
in \eqref{psl}--\eqref{psm}. In other words,
\begin{equation*}
  \cioo
=\set{\ggmu:\mu\in\psl}
=\set{\ggmu:\mu\in\psr}
=\set{\ggmu:\mu\in\psm}.
\end{equation*}
Furthermore, the mapping $\mu\to\ggmu$ is a continuous map of 
each of $\psl$, $\psr$ and $\psm$ onto $\cioo$.
\end{theorem}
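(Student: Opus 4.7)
My plan is to establish the three assertions---equality of sets, equality under each normalization, and continuity---in tandem, since they support one another. The easy inclusion $\set{\ggmu:\mu\in\ps}\subseteq\cioo$ (and its analogues for the normalized sets) is already at hand: for any $\mu\in\ps$, the random graphs $\gmun$ are interval graphs by construction, and $\gmun\to\ggmu$ almost surely by the general graph-limit theory recalled in \refS{SS2.4}.

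The crux is the continuity of $\mu\mapsto\ggmu$ on $\psl$ (the cases $\psr$ and $\psm$ being symmetric). Since convergence in $\cuq$ is detected by the functionals $t(F,\cdot)$, it suffices to fix a graph $F$ on vertex set $\set{1,\ldots,k}$ and prove weak-continuity of
\begin{equation*}
\mu\longmapsto t(F,\ggmu)=\int_{\cS^k}\prod_{ij\in E(F)}W(I_i,I_j)\,d\mu(I_1)\dotsm d\mu(I_k)
\end{equation*}
at each $\mu\in\psl$. By the portmanteau theorem, it is enough that the bounded $\setoi$-valued integrand be $\mu^{\otimes k}$-almost everywhere continuous on $\cS^k$. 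The discontinuities of $W$ at a pair $(I,J)=([a,b],[a',b'])$ sit inside $\set{a=b'}\cup\set{a'=b}$, and a Fubini computation reduces $(\mu\otimes\mu)\bigpar{\set{a=b'}}$ to $\sum_y \mul(\set{y})\mur(\set{y})$, which vanishes whenever at least one marginal is atomless---a condition satisfied by each of $\psl$, $\psr$, $\psm$. A union bound over edges of $F$ extends the null-set claim to $\cS^k$; I expect this to be formalized in \refS{Scont} and will invoke it as given.

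To prove the reverse inclusion together with the normalization claims, I would show that any $\mu\in\ps$ can be replaced by some $\mu'\in\psl$ with $\gG_{\mu'}=\ggmu$, and analogously for $\psr$ and $\psm$. Let $F_L$ be the c.d.f.\ of $\mul$; spreading any atoms of $\mul$ uniformly across the jumps of $F_L$ using an independent auxiliary uniform variable gives a nondecreasing Borel map $\phi:\oi\to\oi$ that pushes $\mul$ forward to $\gl$. Sending $[a,b]\mapsto[\phi(a),\phi(b)]$ and pushing $\mu$ forward yields $\mu'\in\psl$. Because $\phi$ is monotone, $[a,b]\cap[a',b']\neq\emptyset$ iff $[\phi(a),\phi(b)]\cap[\phi(a'),\phi(b')]\neq\emptyset$ off a $\mu^{\otimes 2}$-null set, so \eqref{t} yields $t(F,\gG_{\mu'})=t(F,\ggmu)$ for every $F$, whence $\gG_{\mu'}=\ggmu$. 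The analogous construction based on $\mur$ or $\tfrac12(\mul+\mur)$ handles $\psr$ and $\psm$.

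With these three pieces in hand, the theorem closes up: given $\gG\in\cioo$ with $G_n\to\gG$ for interval graphs $G_n$, represent each $G_n$ by its empirical measure of intervals (\refR{Rghost}), normalize as above to obtain $\tilde\mu_n\in\psl$ with $\gG_{\tilde\mu_n}=\gG_{G_n}\to\gG$, extract a weak-star limit $\mu\in\psl$ by compactness of $\psl$, and conclude $\gG=\ggmu$ by continuity. Surjectivity of the continuous maps on all three normalized sets follows simultaneously. I expect the main technical obstacle to be the continuity step, since one must carefully track the null set of $W$'s discontinuities through higher powers $\cS^k$; the monotone-rearrangement step is elementary but requires some bookkeeping to deal with atoms of the marginal, which is why I phrase it via an auxiliary randomization rather than by directly inverting $F_L$.
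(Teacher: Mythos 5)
Your overall architecture coincides with the paper's: the inclusion $\set{\ggmu:\mu\in\ps}\subseteq\cioo$ via the random interval graphs $\gmun$; continuity of $\mu\mapsto\ggmu$ on the normalized classes via almost-everywhere continuity of the integrand in \eqref{t} together with the portmanteau theorem (this is precisely \refT{Tcont} and \refC{Ccont}, and your identification of the discontinuity set of $W$ and of the condition ``no common atom of $\mul$ and $\mur$'' is correct); and the converse by normalizing the empirical measures of the $G_n$, extracting a weak limit by compactness of $\psl$, and invoking continuity.

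There is, however, a genuine gap in your normalization step. The claim that the monotone rearrangement $[a,b]\mapsto[\phi(a),\phi(b)]$, with atoms of $\mul$ spread by auxiliary randomization, preserves the intersection relation off a $\mu^{\otimes 2}$-null set is false whenever $\mul$ and $\mur$ share an atom. The bad event is a pair of intervals touching exactly at such an atom, $\set{b=a'}\cup\set{a=b'}$; its $\mu\otimes\mu$-measure is (up to the symmetric term) $\sum_x\mul(\set{x})\mur(\set{x})>0$, and on it the independent randomizations of the two endpoints put $\phi(a')>\phi(b)$ with probability $\tfrac12$, destroying the edge. Concretely, for $\mu=\gd_{[1/2,1/2]}$ (which represents the complete graph limit) your construction outputs the law of $[U,U]$ with $U\sim\uoi$, which represents the empty graph limit. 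This is not a corner case you can ignore: you apply the rearrangement to the empirical measures $\mu_n$ of the given finite graphs $G_n$, and these are purely atomic, with $\mu_{nL}$ and $\mu_{nR}$ sharing atoms whenever some interval is degenerate or two intervals merely touch. The repair is the one the paper uses: first re-choose the representing intervals of each finite $G_n$ so that all $2n$ endpoints are distinct (enlarging the closed intervals by sufficiently small amounts neither destroys existing intersections nor, since the finitely many disjoint pairs are at positive distance, creates new ones); after this, your map --- or, more simply, an increasing homeomorphism of $\bbR$ placing the left endpoints at the points $j/n$ --- preserves the intersection pattern exactly, $\mu_{nL}$ becomes uniform on $\set{j/n}$ so that any weak subsequential limit lies in $\psl$, and the rest of your argument goes through verbatim.
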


The mappings $\psl\to\cioo$, $\psr\to\cioo$, $\psm\to\cioo$
are not injective. We return to this question in Sections \ref{Sexamples}
and \ref{Sunique}. 

The proof in \refS{Spf} also shows the following, which gives an
interpretation of the measure $\mu$.
\begin{theorem}\label{TIG}
  Let $G_n$ be an interval graph, for convenience with $n$ vertices,
 defined by intervals 
$I\nj=[a\nj,b\nj]\subseteq\oi$, $i=1,\dots,n$.
Suppose that, as \ntoo, the empirical measure
\begin{equation}\label{mun}
  \mu_n\=\frac1n\sum_{i=1}^n\gd_{I\nj}
\in \cP(\sss)
\end{equation}
converges weakly to a measure $\mu\in\cP(\sss)$, and suppose
further that $\mul$ and $\mur$ have no common atom. Then $G_n$ converges to
the graph limit $\gG_\mu$.
\end{theorem}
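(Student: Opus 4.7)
The plan is to pass through the ghost representation of \refR{Rghost}, which identifies each $G_n$ with $\gG_{\mu_n}$ and so reduces the claim to showing $\gG_{\mu_n}\to\ggmu$ in $\cuq$. Since convergence in $\cuq$ is characterised by the functionals $t(F,\cdot)$, we need only check $t(F,\gG_{\mu_n})\to t(F,\ggmu)$ for each finite graph $F$.

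By \eqref{t}, both sides are integrals of the bounded measurable function
\[
W^F(x_1,\dots,x_{|F|}) \= \prod_{ij\in E(F)} W(x_i,x_j)
\]
against the product measures $\mu_n^{|F|}$ and $\mu^{|F|}$, respectively. Since $\mu_n\to\mu$ weakly on $\sss$, the products $\mu_n^{|F|}\to\mu^{|F|}$ weakly on $\sss^{|F|}$, so by the Portmanteau theorem the convergence of these integrals follows as soon as we verify that the discontinuity set $D_F$ of $W^F$ is $\mu^{|F|}$-null.

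From the explicit form \eqref{w}, the indicator $\ett{I\cap J\neq\emptyset}$ can be flipped by an arbitrarily small perturbation of $(I,J)$ only when the two intervals just touch at a single endpoint. Hence the discontinuity set $D\subseteq\sssq$ of $W$ itself is contained in
\[
\set{([a,b],[c,d])\in\sssq:b=c\text{ or }a=d},
\]
and therefore
\[
D_F\subseteq\bigcup_{ij\in E(F)}\set{x\in\sss^{|F|}:(x_i,x_j)\in D}.
\]
Fubini reduces each set in the union to $\mu^{|F|}$-measure equal to $(\mu\otimes\mu)(D)$, which is in turn bounded by
\[
\int \mul(\set{t})\dd\mur(t)+\int\mur(\set{t})\dd\mul(t)=2\sum_{t}\mul(\set{t})\mur(\set{t}),
\]
the sum running over the at most countable set of atoms of $\mul$ or $\mur$. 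The hypothesis that $\mul$ and $\mur$ have no common atom forces every summand to vanish, so $\mu^{|F|}(D_F)=0$, as required.

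The only real obstacle is the discontinuity of $W$, which rules out a direct appeal to weak convergence against bounded continuous test functions; the no-common-atom condition is exactly what is needed to make the Portmanteau argument go through. Once this measure-theoretic step is in place, the theorem follows immediately from \refR{Rghost} and \eqref{t}.
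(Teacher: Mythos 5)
Your proposal is correct and follows essentially the same route as the paper: the paper reduces the claim via the ghost identification $t(F,G_n)=t(F,\gG_{\mu_n})$ and then invokes its continuity theorem (\refT{Tcont}), whose proof is exactly your argument — $\mu_n^{|F|}\to\mu^{|F|}$ weakly, the integrand in \eqref{t} is $\mu^{|F|}$-a.e.\ continuous because $D_W\subseteq\set{b=c\text{ or }a=d}$ is $\mu\times\mu$-null under the no-common-atom hypothesis, and Billingsley's mapping theorem finishes. You have merely inlined that continuity step rather than quoting it as a separate theorem.
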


Instead of probability measures $\mu\in\ps$, we may equivalently
consider $\cS$-valued random variables, \ie, random intervals
$[L,R]\in\cS$. 
Each such random interval is given by a pair of random variables
$(L,R)$ with $0\le L \le R \le 1$ (\aex), and conversely.
(Of course, we then only care about the (joint) distribution of $(L,R)$.)
Note that the distribution of $[L,R]$ belongs to $\psl$ [$\psr$]
if and only if $L\sim\uoi$ [$R\sim\uoi$].

\begin{example}
A natural model for a collection of confidence intervals
for a basic physical constant (as Youden's data in the introduction
or the speed of light or the gravitational constant)
has intervals of the form
$[\mu_i-c\sigma_i,\mu_i+c\sigma_i]$
with $\mu_i$ and $\sigma_i$ independently chosen,
$\mu_i$ from a normal $(\mu,\sigma^2)$
distribution and $\sigma_i^2$ from a Chi-squared distribution, $c$ is
computed from the 
normal quantile $q$ and the sample size $c=q_{1-\frac{\alpha}{2}}/\sqrt{n}$,
where $\alpha$ is the target  
type I error.
Here the
intervals are not constrained to $\cS$. A natural transformation using the
distribution 
function $F(x)$ of $\mu_i+c\sigma_i$ yields the random intervals
$[F^{-1}(\mu_i-c\sigma_i),F^{-1}(\mu_i+c\sigma_i)]$, which
correspond to points from a distribution on $\cS$ belonging to our $\P_R(\cS)$.

An example, with $\mu_i\sim N(0,4)$ and $\gss_i\sim \frac{4}{19}\chi^2_{19}$ 
is given in 
Figure \ref{fig:normal}.
\begin{figure}
  \centering
  \vskip-1.2cm
  \subfloat[Confidence Intervals (30) for random normal data generated with $\mu=0,\sigma=2$.]{\label{fig:confint}
  \includegraphics[width=0.5\textwidth,height= 6.6cm]{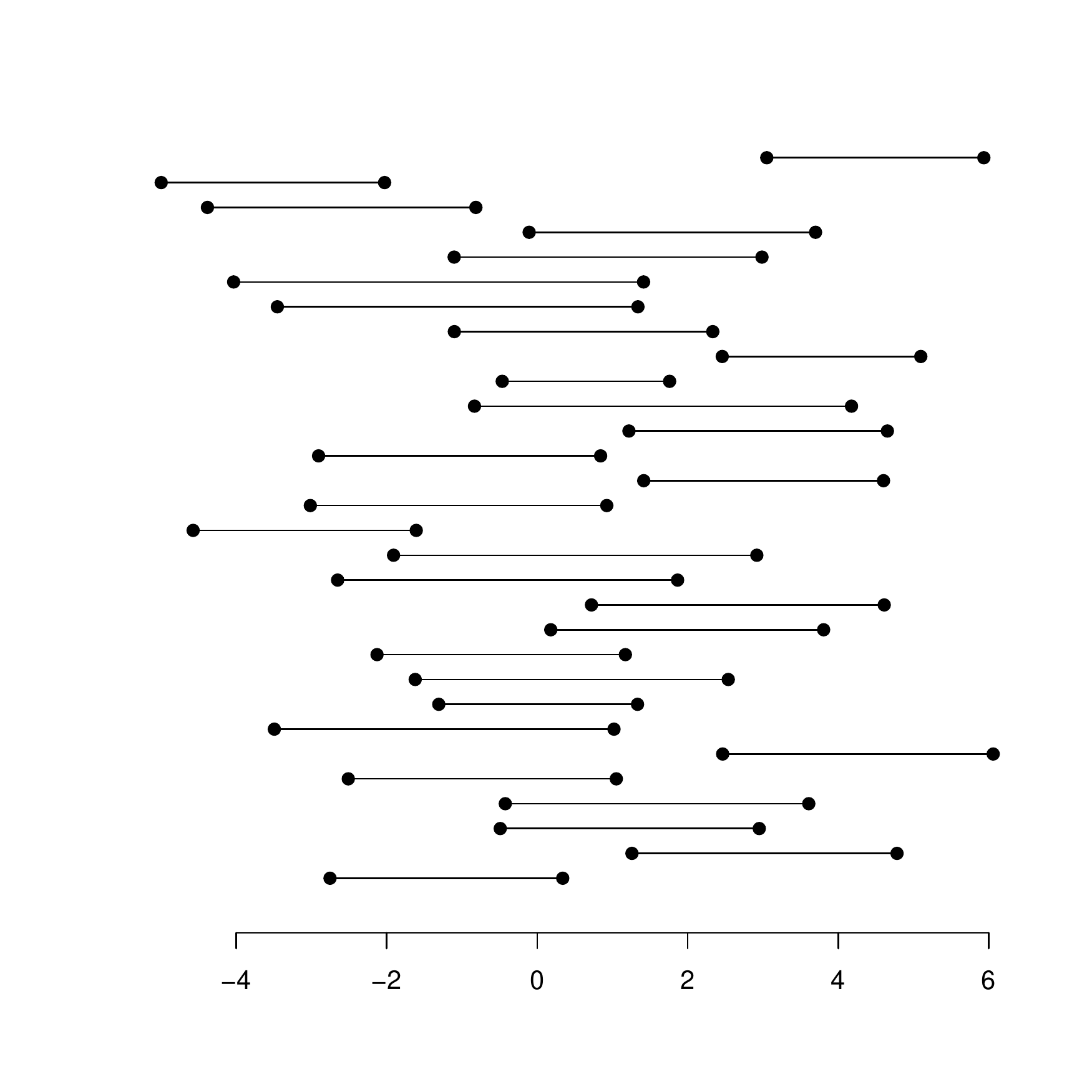}}
  \subfloat[Interval Graph]{\label{fig:normalg}\includegraphics[width=0.5\textwidth,height= 6.6cm]{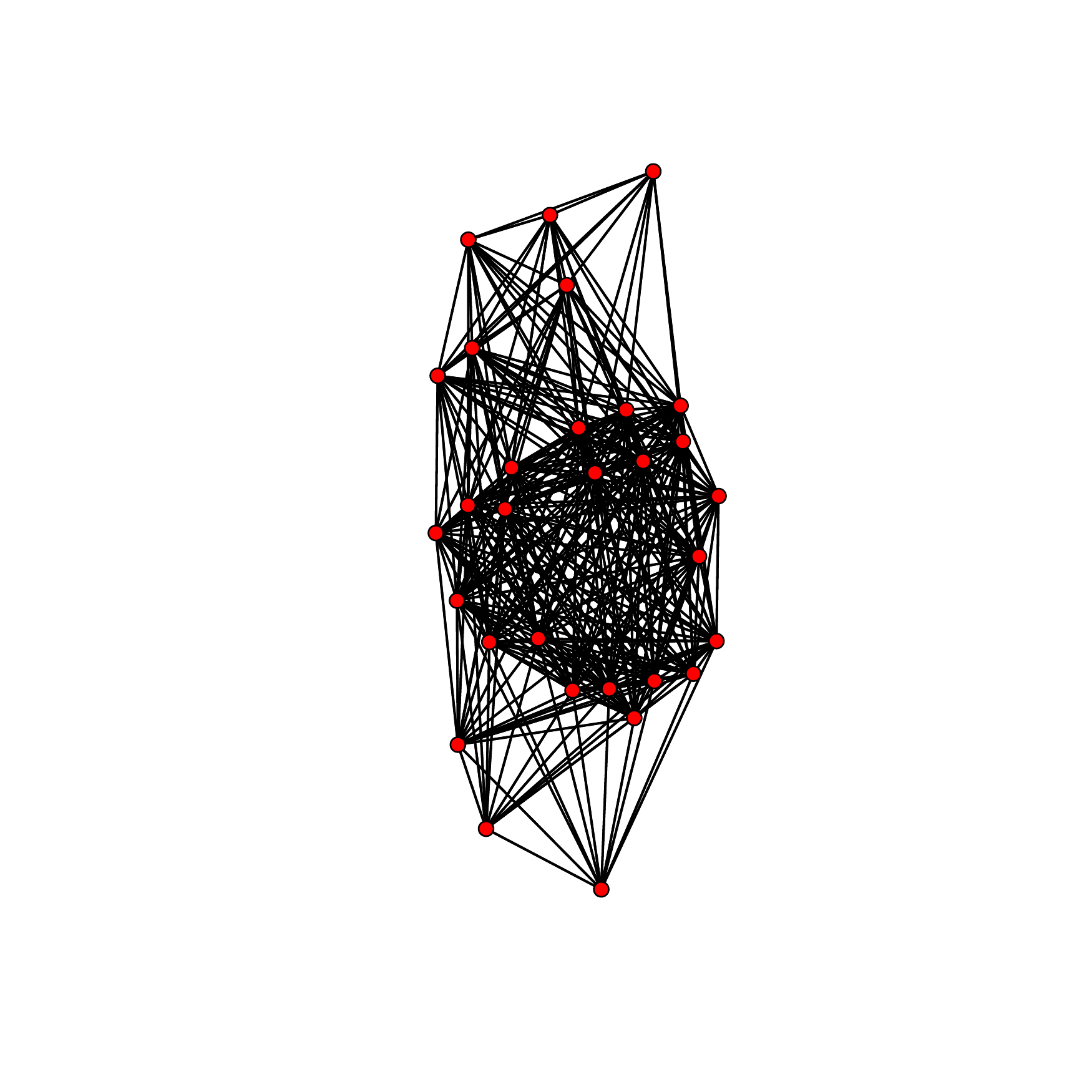}}
  \caption{The interval graph for a sets of Normal intervals.}
  \label{fig:normal}
\end{figure}
Although the graph is not the complete graph as it should be if all $30$ intervals overlapped, the degrees are high and quite even. The degree distribution is:
{\small
\begin{verbatim}
 22 20 25 26 23 14 23 23 27 23 26 25 26 27 13 
 23 17 23 20 27 23 14 24 25 25 26 26 17 11 10
\end{verbatim}
}
\end{example}

\begin{remark}\label{Rrefl}
There is an obvious reflection map of $\cS$ onto itself given by
$[a,b]\mapsto[1-b,1-a]$; we denote the
corresponding map of $\ps$ onto itself by $\mu\mapsto\ch \mu$.
(It terms of random intervals $[L,R]$, this is
$[L,R]\mapsto[1-R,1-L]$.)
The reflection map preserves $W$, and it follows that $\gG_{\ch\mu}=\ggmu$.

Note that $\mu\in\psl\iff\ch\mu\in\psr$, and conversely, 
which means that we can transfer results from $\psl$ to $\psr$, and
conversely, by the reflection map; hence it is enough to consider one
of $\psl$ and $\psr$. 
\end{remark}

\begin{remark}\label{Rrandomfree}
As a corollary to \refT{TI}, we see that every limit of interval graphs
  may be represented by a kernel that is \oivalued . (This implies that
  every representing kernel is \oivalued, see \cite{SJ249} for details.)
Graph classes with this property are called \emph{random-free} by
\citet{LSz:regularity}, who among other results gave a graph-theoretic
characterization of such classes.
We have thus shown that the class of interval graphs
is random-free. We will see in Sections \ref{SSCA}--\ref{SSuig} 
that so are the graph classes considered there.
\end{remark}

\section{Cliques and chromatic number}\label{Sclique}

If $G$ is a graph, let $\chi(G)$ be its chromatic number and $\go(G)$ its
clique number, \ie, the maximal size of a clique.
As said in \refSS{SS2.1}, interval graphs are perfect and $\chi(G)=\go(G)$
for them.
If $G$ is an interval graph defined by a collection of intervals
$\set{I_i}$,
it is easily seen that $\go(G)=\max_x\#\set{i:x\in I_i}$. 
We define the corresponding quantity for measures $\mu\in\cP(\sss)$ by
\begin{equation}\label{go}
  \go(\mu)\=
\sup_{a\in\oi}\mu\set{I:a\in I}
=\sup_{a\in\oi}\mu\bigpar{[0,a]\times[a,1]}.
\end{equation}
Thus, 
if $G$ is an interval graph defined by intervals $I_1,\dots,I_n$ in $\sss$,
and $\mu=\frac1n\sum_{i=1}^n\gd_{I_i}$,
then $\go(G)=n\go(\mu)$.

It is easy to see that $a\mapsto \mu\bigpar{[0,a]\times[a,1]}$
is upper semicontinuous; this implies that the supremum in
\eqref{go} is attained.

We will prove the following results in \refS{Spfgo}.

\begin{lemma}
  \label{Lgo=}
If $\mu_1$ and $\mu_2$ are probability measures on $\sss$ 
that are equivalent in the sense that 
$\gG_{\mu_1} =\gG_{\mu_2} $, then $\go(\mu_1)=\go(\mu_2)$.
\end{lemma}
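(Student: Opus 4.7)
My plan is to show that $\go(\mu)$ is encoded in the graph limit $\ggmu$ through the asymptotics of clique densities; specifically, I aim to prove
\begin{equation*}
  \go(\mu) = \lim_{k\to\infty} t(K_k,\ggmu)^{1/k}.
\end{equation*}
Since $t(K_k,\ggmu)$ depends only on the graph limit, this would immediately give $\go(\mu_1)=\go(\mu_2)$ whenever $\gG_{\mu_1}=\gG_{\mu_2}$. By \eqref{t} and \eqref{w}, $t(K_k,\ggmu)$ is the $\mu^k$-probability that $k$ \iid{} intervals $I_1,\dots,I_k\sim\mu$ are pairwise intersecting, and by the Helly property for intervals on $\bbR$ this coincides with the probability that they share a common point, so $t(K_k,\ggmu) = \mu^k\bigset{\bigcap_{i=1}^k I_i\neq\emptyset}$.

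For the lower bound I use that the supremum in \eqref{go} is attained (by the upper semicontinuity remark after \eqref{go}) at some $a^*\in\oi$ with $\mu\set{I:a^*\in I}=\go(\mu)$. The event that every $I_i$ contains $a^*$ is contained in $\bigset{\bigcap I_i\neq\emptyset}$ and has probability $\go(\mu)^k$, giving $t(K_k,\ggmu)\ge\go(\mu)^k$. For the upper bound, the key observation is that whenever $\bigcap_i I_i\neq\emptyset$, writing $I_i=[L_i,R_i]$ the maximum $\max_i L_i$ lies in every $I_i$. A union bound over the index attaining this maximum, symmetry, and conditioning on $I_1$ yield
\begin{equation*}
  t(K_k,\ggmu) \le k\,\P\bigpar{L_1\in I_j\text{ for every }j\neq1}
  = k\int_{\oi}\mu\set{I:a\in I}^{k-1}\dd\mul(a)
  \le k\,\go(\mu)^{k-1}.
\end{equation*}

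Combining these bounds and taking $k$th roots yields $\go(\mu)\le t(K_k,\ggmu)^{1/k}\le k^{1/k}\go(\mu)^{(k-1)/k}$, which tends to $\go(\mu)$ as \ktoo{} (the degenerate case $\go(\mu)=0$ is handled directly by the upper bound, which forces $t(K_k,\ggmu)=0$ for $k\ge2$). The step I expect to require the most care is the upper bound: one has to check that when several indices tie for the maximum left endpoint the union bound still overcounts rather than undercounts (it does), and to justify the Fubini-type interchange producing the integral against the marginal $\mul$. Everything else is routine once one has the Helly characterization of clique densities in interval graphs.
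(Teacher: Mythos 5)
Your proof is correct, and it takes a genuinely different route from the paper's. The paper disposes of this lemma in two lines: by Theorem \ref{TU}, \ref{TUgg}$\implies$\ref{TUgn}, equal graph limits force $G(n,\mu_1)$ and $G(n,\mu_2)$ to be identically distributed for every $n$, and then Lemma \ref{Lgon} (a separately proved law-of-large-numbers statement, $\frac1n\go(G(n,\mu))\asto\go(\mu)$) identifies $\go(\mu_j)$ as the a.s.\ limit of a functional of these random graphs. You instead exhibit $\go(\mu)$ as an explicit function of the clique densities, $\go(\mu)=\lim_{k}t(K_k,\ggmu)^{1/k}$, which are invariants of the graph limit by definition; the key ingredients are the Helly property of intervals on the line (pairwise intersection implies a common point, so $t(K_k,\ggmu)=\P\bigl(\bigcap_i I_i\neq\emptyset\bigr)$), the attainment of the supremum in \eqref{go} for the lower bound $\go(\mu)^k\le t(K_k,\ggmu)$, and the union bound over which index attains $\max_i L_i$ for the upper bound $t(K_k,\ggmu)\le k\,\go(\mu)^{k-1}$. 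All the steps you flag as delicate are fine: ties in $\max_i L_i$ only make the union bound overcount, the conditioning on $I_1$ is legitimate because $a\mapsto\mu\bigl([0,a]\times[a,1]\bigr)$ is measurable (indeed upper semicontinuous, as the paper notes), and the degenerate case $\go(\mu)=0$ is handled by the upper bound. What each approach buys: the paper's reduction reuses machinery it needs anyway (Theorem \ref{TU} and Lemma \ref{Lgon}, the latter also feeding into Theorem \ref{Tchi}), while yours is self-contained, avoids both the uniqueness theorem and the strong law, and produces the quantitative byproduct that $\go(\gG)$ is computable directly from the subgraph densities of $\gG$. It is worth noting that the paper explicitly remarks that ``a direct analytic proof of Lemma \ref{Lgo=} \dots{} seems more difficult than this argument using random graphs''; your argument is precisely such a direct analytic proof, and it is arguably no harder.
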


This shows that we can define the clique number $\go(\gG)$ for every
interval graph limit $\gG$ by $\go(\gG_\mu)=\go(\mu)$ for $\mu\in\cP(\sss)$.

\begin{theorem}
  \label{Tchi}
Let $G_n$ be an interval graph, for convenience with $n$ vertices,
and suppose that $G_n\to\gG$ as \ntoo{} for some graph limit $\gG$.
Then
\begin{equation}\label{tchi}
\frac1n\chi(G_n)=\frac1n\go(G_n)\to\go(\gG).
\end{equation}
\end{theorem}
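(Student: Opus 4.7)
The plan is to first use the perfectness of interval graphs (mentioned in \refSS{SS2.1}) to reduce to proving $\go(G_n)/n\to\go(\gG)$. My key preprocessing step is to choose, for each $n$, a \emph{canonical} interval representation of $G_n$ in which the endpoints are ``stretched'' to equispaced positions. Since the interval graph depends only on the order of its $2n$ endpoints (which we may assume distinct after a small tie-breaking perturbation), we may relabel the endpoint of rank $k$ by $k/(2n+1)$ without changing $G_n$. For the resulting empirical measure $\mu_n=\frac1n\sum_i\gd_{I_{n,i}}\in\cP(\cS)$, we still have $\go(G_n)=n\go(\mu_n)$, while
\begin{equation*}
\tfrac12\bigpar{\mu_n^L+\mu_n^R}=\frac{1}{2n}\sum_{k=1}^{2n}\gd_{k/(2n+1)}\longrightarrow \gl \text{ weakly.}
\end{equation*}
Thus any weak subsequential limit $\mu'$ of $(\mu_n)$ lies in $\psm$, and in particular $\mu'_L$ and $\mu'_R$ are atomless.

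By compactness of $\cP(\cS)$ and a standard subsequence argument, it suffices to show $\go(\mu_{n_k})\to\go(\gG)$ along every weakly convergent subsequence $\mu_{n_k}\to\mu'$. Since $\mu'_L$ and $\mu'_R$ have no common atom, \refT{TIG} yields $\gG_{\mu_{n_k}}\to\gG_{\mu'}$; but $\gG_{\mu_{n_k}}$ coincides with the ghost of $G_{n_k}$ by \refR{Rghost}, which converges to $\gG$ by hypothesis, so $\gG_{\mu'}=\gG$ and hence $\go(\mu')=\go(\gG)$ by \refL{Lgo=}.

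It remains to show $\go(\mu_{n_k})\to\go(\mu')$. For upper semicontinuity, pick $a_k\in\oi$ attaining $\go(\mu_{n_k})$, pass to a subsubsequence with $a_k\to a^*$, and note that for any $\ep>0$ and $k$ large,
\begin{equation*}
\mu_{n_k}\bigpar{\{I:a_k\in I\}}\le \mu_{n_k}\bigpar{[0,a^*+\ep]\times[a^*-\ep,1]}.
\end{equation*}
Applying Portmanteau to this closed set and then letting $\ep\downto 0$ yields $\limsup_k\go(\mu_{n_k})\le\mu'\bigpar{[0,a^*]\times[a^*,1]}\le\go(\mu')$. For lower semicontinuity, consider the open set $U_a=\{[x,y]\in\cS: x<a<y\}$: atomlessness of $\mu'_L$ and $\mu'_R$ gives $\mu'(U_a)=\mu'\set{I:a\in I}$ for every $a$, so Portmanteau yields $\liminf_k\go(\mu_{n_k})\ge\liminf_k\mu_{n_k}(U_a)\ge\mu'(U_a)$; taking the supremum over $a$ gives $\liminf_k\go(\mu_{n_k})\ge\go(\mu')$.

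The main obstacle is the well-known discontinuity of both $\mu\mapsto\gG_\mu$ and $\mu\mapsto\go(\mu)$ at measures whose $L$- and $R$-marginals share an atom; the endpoint renormalization carried out at the start is precisely what rules out this pathology in every subsequential limit $\mu'$.
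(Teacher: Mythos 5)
Your proof is correct and follows essentially the same route as the paper's: normalize the interval representation so that the $2n$ endpoints are equispaced (forcing every subsequential limit $\mu'$ into $\psm$), identify $\gG_{\mu'}=\gG$ via \refT{TIG} and \refL{Lgo=}, and establish $\go(\mu_{n_k})\to\go(\mu')$ by a two-sided semicontinuity argument. The only difference is that you spell out with the Portmanteau theorem what the paper calls ``a routine argument.''
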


\begin{remark}
Neither $\frac1n\chi$ nor $\frac1n\go$
are continuous functions on the space of all graphs. This may be seen by the following construction: a sequence of dense graphs which tend to the limiting Erd\"os-Renyi graph with $p=1$ (complete graph) but with $\frac1n\chi$ and $\frac1n\go$
converging to limits different from one.
For the construction, let $G_n$ be an Erd\"os-Renyi graph with $p=1-\frac{1}{\sqrt{n}}$.
This converges to the same limit as the sequence of complete graphs $K_n$. However,
an easy argument shows that  $\frac1n\go$  converges to zero. The same example 
can be used to show that $\frac1n\chi$
is not continuous. For this we use the following
\begin{lemma}
For any graph $G$ with $n$ vertices, $\chi(G) \leq (n+\go(G))/2$ 
\end{lemma}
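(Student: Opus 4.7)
The plan is to find a proper coloring of $G$ by pairing up non-adjacent vertices via a matching in the complement graph $\bar G$, and then to bound the size of this matching from below using the clique number of $G$.

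First, let $M$ be a maximum matching in $\bar G$, of size $m$. For each edge $\set{u,v}\in M$, the vertices $u,v$ are non-adjacent in $G$, so I can assign them the same color. Assigning a distinct new color to every unmatched vertex, I obtain a proper coloring of $G$ using exactly $m+(n-2m)=n-m$ colors. Hence $\chi(G)\le n-m$.

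Next, I observe that the set $U$ of $M$-unmatched vertices is an independent set in $\bar G$: if two vertices of $U$ were joined by a $\bar G$-edge, that edge could be added to $M$, contradicting maximality. Translating back to $G$, the set $U$ is a clique in $G$, so $|U|=n-2m\le\go(G)$, which rearranges to $m\ge (n-\go(G))/2$.

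Combining the two bounds,
\begin{equation*}
\chi(G)\le n-m\le n-\frac{n-\go(G)}{2}=\frac{n+\go(G)}{2},
\end{equation*}
as claimed. There is no real obstacle here: the only point that requires a moment's thought is the fact that the unmatched vertices of a maximum matching form an independent set in the ambient graph, which is a standard and immediate consequence of maximality.
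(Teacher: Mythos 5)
Your proof is correct and is essentially the paper's argument: both pair up non-adjacent vertices (a matching in the complement graph), reuse one color per pair, and observe that the leftover vertices are pairwise adjacent, hence form a clique of size at most $\go(G)$. The only cosmetic difference is that you take a maximum matching in $\bar G$ while the paper pairs greedily (a maximal matching), but only maximality is used in either case.
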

\begin{proof}
Color by picking two non adjacent vertices, giving both the same new color.
Repeat until a connected subgraph of size $m$ remains and give each remaining vertex a separate color. This uses $(n-m)/2+m=(n+m)/2$ colors and $m\leq \go(G)$.
\end{proof}
For the random graphs constructed above, $\go(G)=o(n)$ implies
$\chi(G)\leq \frac{n}{2}+o(n)$. Thus $\frac1n\chi$ is discontinuous.
\end{remark}

\section{Continuity}\label{Scont}

The mapping $\mu\mapsto\ggmu$ of $\ps$ into $\cuoo$ is not
continuous. 
However, the following holds, as we will prove below.
\begin{theorem}
  \label{Tcont}
The mapping $\mu\mapsto\ggmu$ of $\ps$ into $\cuoo$ is continuous at
every $\mu\in\ps$ such that $\mul$ and $\mur$ have no common atom.
Conversely, it is continuous only at these $\mu$.
\end{theorem}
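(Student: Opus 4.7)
The plan is to reduce the continuity of $\mu\mapsto\ggmu$ at a point $\mu$ to the continuity, at that $\mu$, of the functionals
\[
\mu\mapsto t(F,\ggmu)=\int_{\sss^{|F|}}\prod_{ij\in E(F)}W(I_i,I_j)\dd\mu^{\otimes|F|}
\]
for all finite graphs $F$; this reduction is legitimate since, as recalled in \refSS{SS2.4}, an element of $\cuoo$ is determined by the numbers $t(F,\cdot)$ and convergence in $\cuq$ is characterised by their convergence. Since $W=\ett{I\cap J\ne\emptyset}$ is the indicator of the closed set $\set{([a,b],[c,d]):a\le d,\;c\le b}\subseteq\sssq$, it is upper semicontinuous, and the boundary of $\{W=1\}$ consists exactly of those pairs $(I,J)$ meeting at a single shared endpoint, i.e.\ with $R(I)=L(J)$ (and $L(I)\le R(J)$) or $L(I)=R(J)$ (and $L(J)\le R(I)$). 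The discontinuity set $D_F\subseteq\sss^{|F|}$ of the integrand is therefore contained in the union, over $ij\in E(F)$, of the corresponding ``endpoint-touch'' sets pulled back through the $i$th and $j$th coordinate projections.

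For the forward direction, assume $\mul$ and $\mur$ have no common atom and $\mu_n\to\mu$ weakly in $\ps$. For any pair $(i,j)$,
\[
\mu^{\otimes 2}\set{(I,J):R(I)=L(J)}=\sum_{t\in\oi}\mur(\{t\})\mul(\{t\})=0,
\]
and symmetrically for $\set{L(I)=R(J)}$, so $\mu^{\otimes|F|}(D_F)=0$. Since weak convergence passes to products, $\mu_n^{\otimes|F|}\to\mu^{\otimes|F|}$ weakly on $\sss^{|F|}$, and the Portmanteau theorem applied to the bounded integrand (whose discontinuity set is $\mu^{\otimes|F|}$-null) gives $t(F,\gG_{\mu_n})\to t(F,\ggmu)$ for every $F$, i.e.\ $\gG_{\mu_n}\to\ggmu$ in $\cuq$.

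For the converse, assume $\mul$ and $\mur$ share an atom at some $t\in\oi$ and write $\alpha\=\mur(\{t\})>0$, $\beta\=\mul(\{t\})>0$, $\gamma\=\mu(\{[t,t]\})\in[0,\min(\alpha,\beta)]$. I would construct $\mu_n\to\mu$ weakly with $t(K_2,\gG_{\mu_n})\not\to t(K_2,\ggmu)$; this already forces discontinuity. Define $\pi_n\colon\sss\to\sss$ by pushing every right endpoint equal to $t$ down to $t-1/n$ and every left endpoint equal to $t$ up to $t+1/n$, while leaving the point interval $[t,t]$ fixed, and take $\mu_n\=\mu\circ\pi_n^{-1}$. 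Then $\pi_n\to\mathrm{id}$ pointwise on $\sss$, so $\mu_n\to\mu$ weakly, and a short case analysis of the pairs $(I,J)$ on which $W(\pi_n I,\pi_n J)=0$ while $W(I,J)=1$ (those meeting only at $t$) gives
\[
t(K_2,\ggmu)-t(K_2,\gG_{\mu_n})\longrightarrow 2\alpha\beta-2\gamma^2,
\]
which is strictly positive unless $\alpha=\beta=\gamma$. In that remaining degenerate case all of the common-atom mass is concentrated at $[t,t]$, and no shift of endpoints can separate two copies of the same point mass; I would then instead \emph{split} that atom and set
\[
\mu_n\=\mu-\gamma\gd_{[t,t]}+\tfrac{\gamma}{2}\gd_{[t-1/n,t-1/n]}+\tfrac{\gamma}{2}\gd_{[t+1/n,t+1/n]},
\]
for which $\mu_n\to\mu$ weakly and an analogous count yields $t(K_2,\ggmu)-t(K_2,\gG_{\mu_n})\to\gamma^2/2>0$.

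The heart of the argument lies in the converse. Once the boundary of $\{W=1\}$ has been pinned down, the forward direction is essentially a one-line application of Portmanteau; the real content is in designing a perturbation that genuinely separates the common atom and in tracking its effect on a small invariant such as $t(K_2,\cdot)$. The mild case split forced by the point-interval $[t,t]$ (where a pure endpoint-shift fails and one must split the point mass itself) is the only reason the construction is not entirely uniform in $\mu$.
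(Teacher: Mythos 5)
Your proof is correct and follows essentially the same route as the paper: the forward direction via the mapping/Portmanteau theorem after noting that the discontinuity set of $W$ (pairs sharing an endpoint) is $\mu\times\mu$-null when $\mul$ and $\mur$ have no common atom, and the converse via an explicit perturbation $\mu_n\to\mu$ that separates intervals meeting only at the common atom so that $t(K_2,\gG_{\mu_n})\not\to t(K_2,\ggmu)$ (the paper splits cases on whether $\mu$ has an atom at $[t,t]$ rather than on $\alpha=\beta=\gamma$, but the idea is identical). The only easily repaired blemishes are in your converse: $\pi_n$ as stated sends $[c,t]$ with $t-1/n<c<t$ to a degenerate ``interval'' outside $\sss$ (restrict the modification to $c\le t-1/n$ as the paper does; the offending mass vanishes in the limit anyway), and the shifts $t\pm 1/n$ need a one-line adjustment when $t\in\{0,1\}$.
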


In particular, $\ggmu$ is a continuous function of $\mu$ at every
$\mu$ such that either $\mul$ or $\mur$ is continuous, which yields
the following corollary.

\begin{corollary}
  \label{Ccont}
The mapping $\mu\mapsto\ggmu$ is a continuous map
$\psl\to\cuoo$, $\psr\to\cuoo$ and $\psm\to\cuoo$.
\end{corollary}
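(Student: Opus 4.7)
The plan is to deduce the corollary as an immediate consequence of \refT{Tcont}: for each of the three subsets $\psl$, $\psr$, $\psm$, I will verify that the defining normalization forces the hypothesis of \refT{Tcont}, namely that $\mul$ and $\mur$ have no common atom. Once this is checked, \refT{Tcont} gives continuity of $\mu\mapsto\ggmu$ at every point of the subset, so the restriction of the (not-everywhere-continuous) global map to the subset is continuous.

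For $\mu\in\psl$, by definition $\mul=\gl$, and Lebesgue measure has no atoms at all; hence $\mul$ has no atom in common with $\mur$. Similarly, for $\mu\in\psr$, we have $\mur=\gl$, which has no atoms, so again there is no common atom with $\mul$.

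The only slightly non-trivial case is $\psm$. If $\mu\in\psm$, then $\tfrac12(\mul+\mur)=\gl$. Since both $\mul$ and $\mur$ are non-negative measures, for any $a\in\oi$ we have
\begin{equation*}
0=\gl\{a\}=\tfrac12\bigl(\mul\{a\}+\mur\{a\}\bigr),
\end{equation*}
which forces $\mul\{a\}=\mur\{a\}=0$. Thus neither marginal has any atom, and in particular they have no common atom. Hence every $\mu\in\psm$ also satisfies the hypothesis of \refT{Tcont}.

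In all three cases \refT{Tcont} applies at every point, so $\mu\mapsto\ggmu$ is continuous throughout $\psl$, $\psr$, and $\psm$, respectively. No step is an obstacle; the only observation worth noting is the elementary fact that atoms of a convex combination of non-negative measures are the union of the atoms of the summands, which is what handles the $\psm$ case.
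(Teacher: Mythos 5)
Your proof is correct and follows the paper's route exactly: the paper likewise deduces the corollary from \refT{Tcont} by noting that the relevant marginal(s) are non-atomic, with the $\psm$ case handled by the same observation that $\tfrac12(\mul+\mur)=\gl$ forces both marginals to be atomless.
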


To prove \refT{Tcont},
we begin by letting $D_W\subset\cS^2$ be the set of discontinuity
points of $W$.

\begin{lemma}
  \label{LDW}
$D_W=\bigset{([a,b],[c,d]):b=c \text{ \rm or } a=d}$.
\end{lemma}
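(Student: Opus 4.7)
The plan is to reduce the question to comparisons of strict inequalities among the four coordinates, exploiting the fact that $W$ is $\setoi$-valued. Writing $I=[a,b]$ and $J=[c,d]$ (with $a\le b$ and $c\le d$ built into $\sss$), one has $I\cap J=\emptyset$ if and only if $b<c$ or $d<a$, so
\[
W([a,b],[c,d])=1 \iff b\ge c \text{ and } d\ge a,
\]
and $W=0$ on the open subset $\{b<c\}\cup\{d<a\}$ of $\sss^2$.

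Continuity off the claimed set is then immediate. If $b\neq c$ and $a\neq d$ at the base point, then either $b<c$ or $d<a$ (each an open condition, yielding $W=0$ in a neighborhood), or else $b>c$ and $d>a$ (again open, yielding $W=1$ in a neighborhood); so $W$ is locally constant and hence continuous. Conversely, at any point of the claimed set I would first check $W=1$: the constraints $a\le b$ and $c\le d$ together with $b=c$ force $a\le b=c\le d$, and symmetrically $a=d$ forces $c\le d=a\le b$, so the intervals always share the common endpoint. Then I would exhibit a sequence of nearby disjoint interval pairs witnessing discontinuity: for $b=c$, shrink $b$ slightly (when $a<b$) or enlarge $c$ slightly upward (when $c<d$), producing $b_n<c_n$ and hence $W_n=0$; the symmetric construction, or equivalently the reflection of \refR{Rrefl}, handles $a=d$.

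The only genuine obstacle is the bookkeeping in the boundary subcases where one or both intervals are degenerate or abut the endpoints of $\oi$, in particular the fully degenerate configuration $a=b=c=d$. In that configuration both intervals are the same singleton, and I would displace one of them by $\pm 1/n$ in a direction that stays inside $\oi$; since $a=c\in\oi$ and $0<1$, at least one of $a-1/n\ge 0$ or $c+1/n\le 1$ holds for large $n$, so such a disjoint perturbation always exists. This closes the proof.
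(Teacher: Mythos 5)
Your proof is correct; the paper itself dismisses this lemma with the single word ``Obvious,'' so your case analysis simply supplies the verification the authors omitted. The key points -- that $W=1$ exactly on the closed set $\{b\ge c\}\cap\{d\ge a\}$, that $W$ is locally constant off the boundary set, and that every boundary point (including the degenerate $a=b=c=d$ configuration) admits a disjoint perturbation inside $\cS$ -- are all handled correctly.
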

\begin{proof}
  Obvious.
\end{proof}

\begin{proof}[Proof of \refT{Tcont}]
  Suppose that $\mu_n\to\mu$ in $\ps$ and that $\mul$ and $\mur$ have
  no common atom. Then \refL{LDW} implies that $\mu\times\mu(D_W)=0$,
  and it follows that if $F\in\cU$ and $k=|F|$, then
$\prod_{ij\in E(F)} W(x_i,x_j):\cS^k\to\setoi\subset\bbR$ is
  $\mu^k$-\aex{} continuous. Further, $\mu_n^k\to\mu^k$ in
  $\cP(\cS^k)$, and thus 
  $t(F,\gG_{\mu_n})  \to t(F,\gG_{\mu}) $ by \eqref{t}, see
  \cite[Theorem 5.2]{Bill}. Hence, $\gG_{\mu_n}\to\gG_\mu$ by the
  definition of $\cuoo$.

For the converse (which we will not use), assume that $a$ is a common
atom of $\mul$ and $\mur$. By symmetry we may suppose that $a>0$. 
Let, for $n>1/a$, $a_n\=a-1/n$.
If
$\mu$ has an atom at $[a,a]$, we define $\mu_n$ 
by
moving half of that atom to $[a_n,a_n]$. Otherwise, we 
replace every interval $[c,a]$ with $c\le a_n$ by $[c,a_n]$; this
yields a map $\cS\to\cS$ which maps $\mu$ to a measure $\mu_n$.
It is easy to see, in both cases, that
$\mu_n\to\mu$ but,
using \eqref{t},
\begin{equation*}
 t(K_2,\gG_{\mu_n})=
\int_{\cS^2} W \dd\mu_n\times\dd\mu_n
\not\to
\int_{\cS^2} W \dd\mu\times\dd\mu
=t(K_2,\ggmu). 
\end{equation*}
Hence $\gG_{\mu_n}\not\to\ggmu$.
\end{proof}

\section{Proof of Theorems \ref{TI} and \ref{TIG}}\label{Spf}

\begin{proof}[Proof of \refT{TIG}]
It follows from
\eqref{t}, see \refR{Rghost},
that
\begin{equation}\label{kia}
  t(F,G_n)=t(F,\gG_{\mu_n}), \qquad F\in\cU.
\end{equation}
\refT{Tcont} shows that
$\gG_{\mu_n}\to\ggmu$, \ie, 
$t(F,\gG_{\mu_n})\to t(F,\ggmu)$ for every $F\in\cU$. By \eqref{kia},
this implies $t(F,G_n)\to t(F,\ggmu)$, $F\in\cU$, and thus
$G_n\to\ggmu$.   
\end{proof}

\begin{proof}[Proof of \refT{TI}]
If $\mu\in\ps$, then as said in \refSS{SS2.4}, $\ggmu$ is the
limit \as{} of the sequence $\gmun$ of interval graphs, and thus
$\ggmu\in\cioo$.

Conversely, if $G_n$ is a sequence of interval graphs and
$G_n\to\gG\in\cuoo$, then each $G_n$ is represented by some sequence
of closed intervals $I\nj=[a\nj,b\nj]\subset\bbR$, $i=1,\dots,n$.
By, if necessary, 
increasing the lengths of these interval by small
(and, \eg, random) amounts, we may further assume that for each $n$,
the $2n$ endpoints \set{a\nj,b\nj:1\le i\le n} are distinct.

Using an increasing homeomorphism $\gf_n$ of $\bbR$ onto itself, we may
further assume that the left endpoints \set{a\nj:1\le i\le n} are the
points \set{j/n: 0\le j<n} in some order, and further that all
endpoints $b\nj\le1$. Thus $I\nj\in\cS$ for every $i$. Let 
$\mu_n\in\cP(\sss)$ be 
the corresponding probability measure given by \eqref{mun}.

Since $\cS$ is compact, the sequence $\mu_n$ is automatically tight,
and there exists a probability measure $\mu\in\ps$
such that, at least along a subsequence, $\mu_n\to\mu$. As a
consequence, $\mu_{nL}\to\mul$, and since we have forced $\mu_{nL}$ to
be the uniform measure on the set \set{j/n:j=0,\dots,n-1}, the limit
$\mul=\gl$. Hence $\mu\in\psl$. 

Consequently, \refT{TIG} applies and shows that (along the subsequence)
$G_n\to\ggmu$. Hence $\gG=\ggmu$.

This shows that every $\gG\in\cioo$ equals $\ggmu$ for some
$\mu\in\psl$. The same argument but choosing the homeomorphism $\gf_n$
of
$\bbR$ onto itself such that the right endpoints or all $2n$ endpoints
are evenly spaced in $\oi$ similarly yields $\gG=\ggmu$ with
$\mu\in\psr$ or  $\mu\in\psm$.

This, combined with \refC{Ccont}, completes the proof of \refT{TI}.  
\end{proof}

\section{Examples}\label{Sexamples}

As is well-known, representations as in \refS{S:intro} 
of graph limits by symmetric
measurable functions $W$ on a probability space are far from unique,
see \eg, \cite{LSz,BCLSVi,DJ} and \refS{Sunique}.

In particular, an interval graph limit $\gG\in\cI$ may be represented as
$\ggmu$ for many different $\mu\in\ps$.
For example,
any monotone (increasing or decreasing) homeomorphism $\oi\to\oi$
induces a homeomorphism of $\cS$ onto itself which preserves $W$, and
hence maps any $\mu\in\ps$ to a measure $\mu'$ with $\gG_{\mu}=\gG_{\mu'}$.
(One example of such a homeomorphism of $\cS$ onto itself is the
reflection map in \refR{Rrefl}, induced by the map $x\to1-x$.)
 
If we use one of the normalizations in \eqref{psl}--\eqref{psm} and
consider only $\psl$, $\psr$ or $\psm$, the possibilities are severly
restricted, and we have uniqueness in some cases, but not all.

\begin{example}\label{EKn}
  The complete graph $K_n$ is an interval graph, and can be
  represented by any family of intervals that contain a common point.
The sequence converges to a graph limit $\gG\in\cI$. 
On the standard space $\oi$, $\gG$  is simply represented
by the function $\oi^2\to\oi$ that is identically 1, but we are are
  interested in representations 
as $\ggmu$ for 
$\mu\in\ps$. Clearly, $\gG=\ggmu$ for 
any $\mu\in\ps$ such that
  there exists a point $c\in\oi$ with $\mu$  supported on the
  set \set{[a,b]:a\le c\le b}.

It is easily seen that there is a unique representation with
$\mu\in\psl$; $\mu$ is the distribution of $[U,1]$ with $U\sim \uoi)$.

Similarly (and equivalently by reflection), there is a unique
representation with 
$\mu\in\psr$; $\mu$ is the distribution of $[0,U]$ with $U\sim \uoi$.

However, there are many representations with $\mu\in\psm$; 
these are given by random
intervals $[L,R]$ where $(L,R)$ has any joint distribution with the
marginals $L\sim\U(0,\frac12)$ and $R\sim\U(\frac12,1)$.
\end{example}

\begin{example}\label{E2}
Consider the disjoint union of two complete graphs with $\floor{an}$
  and $n-\floor{an}$ vertices, where $0<a<1/2$.
This sequence of graphs converges as $\ntoo$ to a graph limit that is
  represented by two measures in $\psl$, with corresponding 
  random intervals $[L,R]$ where $L\sim\uoi$ and $R$
is given by either
  \begin{equation*}
R\=
\begin{cases}
  a,& L\le a, \\
  1,& L> a, 
\end{cases}
  \end{equation*}
or the same formula with $a$ replaced by $1-a$.
It can be seen that these two measures are the only measures in $\psl$
representing the graph limit.
(This is an example of a sum of two graph limits; see \cite{SJ213} for
general results on such sums and decompositions.)
\end{example}

\begin{example}\label{E3}
  More generally, let $(p_i)_1^m$ be a finite or infinite sequence of
  positive numbers with sum 1. Let $G_n$ be the interval graph
  consisting of disjoint complete graphs of orders $\floor{np_1}$,
  $\floor{np_2}$, \dots. (Hence, $|G_n|=n-o(n)$.)
It is easily seen that $G_n\to\gG$ for some $\gG\in\cuoo$; thus
  $\gG\in\cI$. (Again, cf.\ \cite{SJ213}.)

To represent $\gG$ as $\ggmu$ with $\mu\in\psl$, let $(J_i)_1^m$ be a
partition of $(0,1]$ into disjoint intervals $J_i=(a_i,b_i]$ with
  $\gl(J_i)=p_i$. Then, if $L\sim\uoi$ and $R$ is defined by $R\=b_i$
  when $L\in J_i$, the random interval $[L,R]$ represents $\gG$. 
If $m<\infty$ and $p_1,\dots,p_m$ are distinct, this gives $m!$
different measures $\mu\in\psl$ representing the same $\ggmu$, since
the intervals $J_i$ may come in any order.
If $m=\infty$, we have an infinite number of different representations.
\end{example}

\begin{example}\label{EU}
  The random interval graph studied by \citet{Scheinerman:1988}, see
  \refSS{SS2.3}, is defined as $G(n,\mu)$ where $\mu\in\cP(\sss)$ is the
  uniform measure on $\sss$; thus $\mu$ has the density $2\dd x\dd y$ on
  $0\le x\le y\le 1$.
Note that the marginal distributions $\mul$ and $\mur$ have densities
$2(1-x)$ and $2x$ on \oi, and are thus not uniform. Hence $\mu\notin\psl$
and $\mu\notin\psr$; however, $\mu\in\psm$.

The integral 
$W_1([x,y])\=\int_\sss W([x,y],J)\dd\mu(J) =1-x^2-(1-y)^2$;
this leads
by \refSS{SSdegrees} and a calculation to the 
degree distribution \eqref{1988d} found by \citet{Scheinerman:1988}.

It is easily seen that $\go(\mu)=1/2$, and thus \refT{Tchi} yields
Scheinerman's result that $\chi(G(n,\mu))/n\to 1/2$ (with
convergence a.s.).

To obtain an equivalent representing measure
$\mu'\in\psr$, we apply the homeomorphism
$x\mapsto x^2$ of $\oi$ onto itself; this measure $\mu'$ has the density 
$(2\sqrt{xy})\qw\dd x\dd y$ on   $\sss=\set{[x,y]:0\le x\le y\le 1}$.

\end{example}

\begin{example} 
  \citet{Scheinerman:1990} studies another random interval graph model,
  defined by random intervals $[x_i-\rho_i,x_i+\rho_i]$ where $x_i\sim\U(0,1)$
and $\rho_i\sim \U(0,r)$ are independent, and $r>0$ is a parameter.
This is $G(n,\mu)$ where $\mu$ is the uniform distribution on the tilted
rectangle with vertices in $(0,0)$, $(1,1)$, $(1-r,1+r)$, $(-r,r)$; this
rectangle does not lie inside our standard triangle (i.e., the intervals are
not necessarily inside $\oi$), but we may scale it to, for example, 
the rectangle with vertices $(0,\frac{2r}{1+2r})$,
$(\frac{r}{1+2r},\frac{r}{1+2r})$,
$(\frac{r+1}{1+2r},\frac{r+1}{1+2r})$,
$(\frac{1}{1+2r},1)$.
See   \refF{fig:scheinermangraph} for an example. 
\begin{figure}
  \centering
  \hskip-1cm
  \subfloat[Support]{\label{fig:tilted}\includegraphics[width=0.4\textwidth,height= 4.7cm]{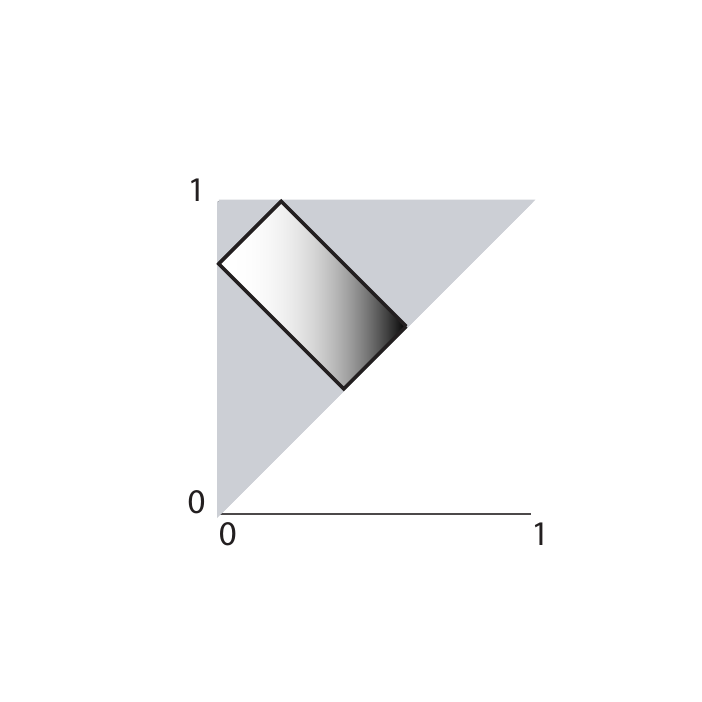}}
  \hskip-1cm                
  \subfloat[Intervals]{\label{fig:schei2}\includegraphics[width=0.4\textwidth,height= 4.7cm]{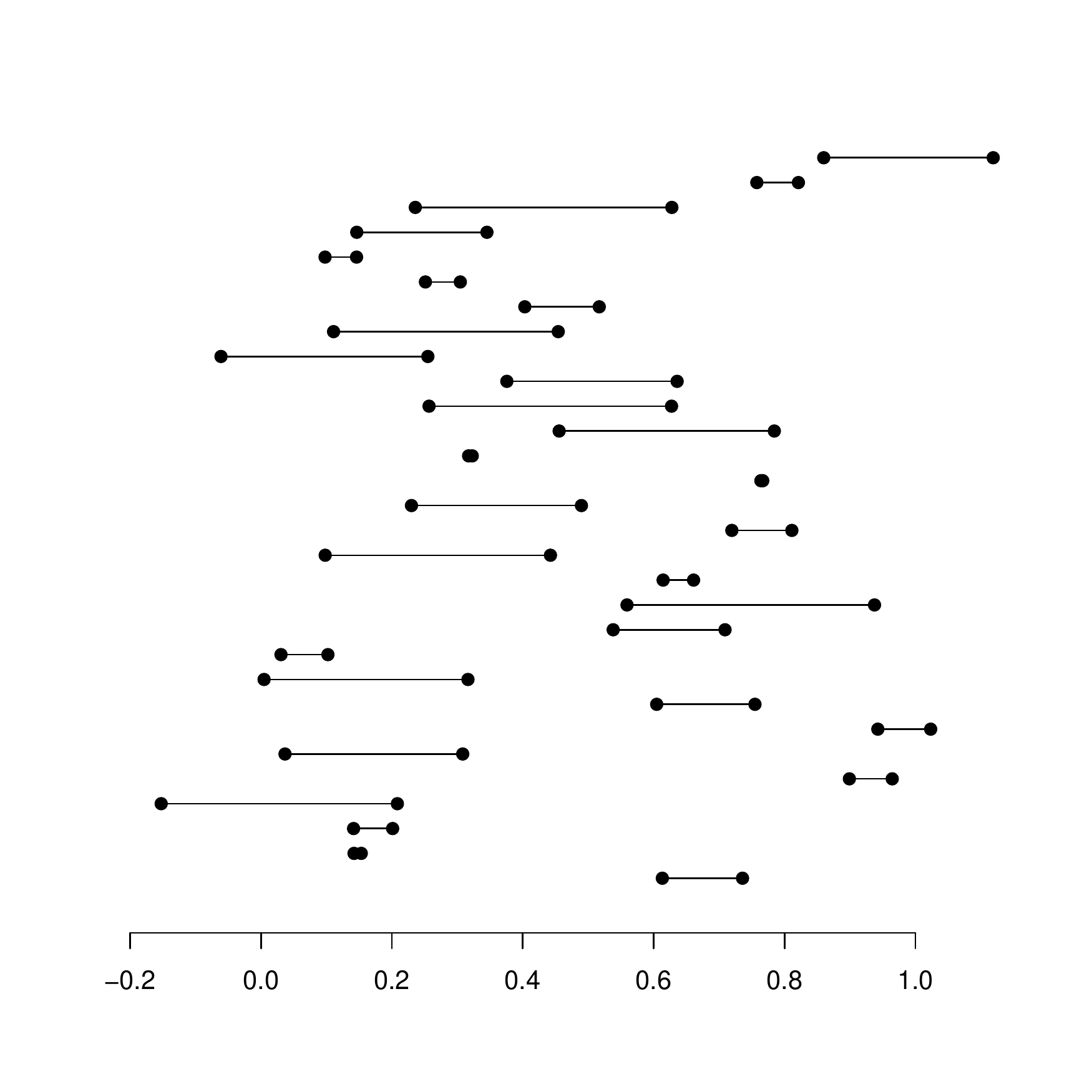}}
  \hskip-1cm                
    \subfloat[Interval Graph]{\label{fig:schei3}\includegraphics[width=0.4\textwidth,height= 4.7cm]{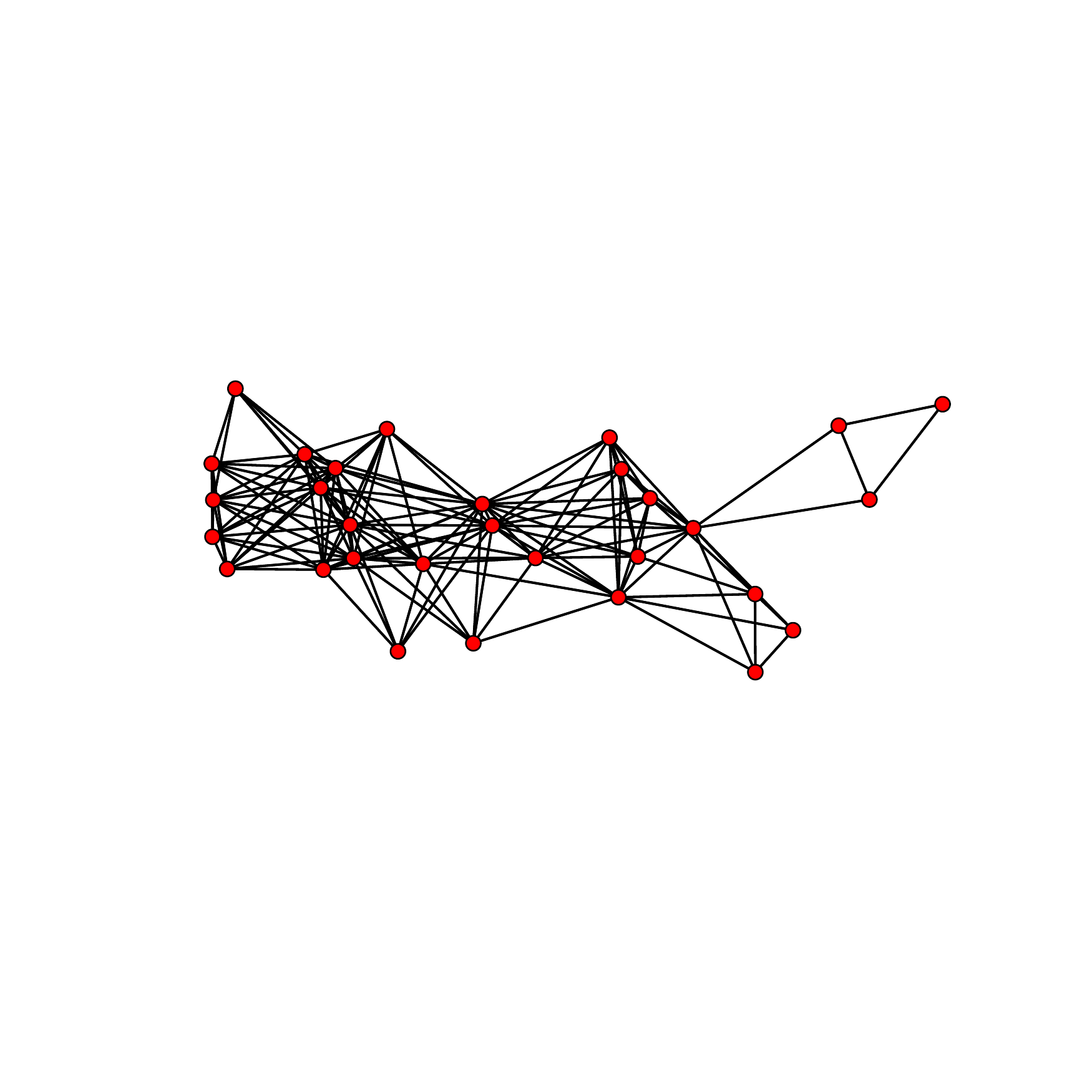}}
  \caption{({\sc A}) shows support of $\mu$: tilted rectangle within $\sss$.
	({\sc B}) shows the intervals with choice of parameters: 30 intervals,
	$r=0.2$. ({\sc C}) shows the corresponding interval graph.} 
  \label{fig:scheinermangraph}
\end{figure}

\end{example}

\begin{example}\label{Er}
Let $0<r\le 1$ and let $\mu$ be uniform on the line 
\set{(x,x+r):0\le x\le 1-r}. This is the set of intervals of length $r$
inside \oi, so 
by scaling we obtain a random set of intervals of length 1 in $\bbR$;
hence the random graph $G(n,\mu)$ is in this case a \emph{unit interval
  graph}, see \refSS{SSuig}.

The degree distribution $\nu(\mu)$, \ie, the asymptotic degree destribution of 
the random graph $G(n,\mu)$, is easily found from \refS{SSdegrees}.
For example, if $r\le \frac13$, then 
$\nu(\mu)$ is the distribution of $W(X)$ with $X\sim\U(0,1-r)$ and
\begin{equation*}
  W_1(x)=
  \begin{cases}
\frac{x+r}{1-r},&0\le x<r,\\
\frac{2r}{1-r},&r\le x\le 1-2r,\\
\frac{1-x}{1-r},&1-2r<x\le 1-r.	
  \end{cases}
\end{equation*}
Thus, $\nu(\mu)$ has a density $2$ on $[\frac{r}{1-r},\frac{2r}{1-r})$ 
and a point mass  $ \frac{1-3r}{1-r}$ at $\frac{2r}{1-r}$.
If $\frac13\le r\le\frac12$, then similarly 
$\nu(\mu)$ has a density $2$ on $[\frac{r}{1-r},{1})$ 
and a point mass  $ \frac{3r-1}{1-r}$ at $1$.
If $r\ge\frac12$, then $G(n,\mu)$ is the complete graph
and $\nu(\mu)$ is a point mass at 1.

The chromatic number is by \refT{Tchi} a.s.\ $\frac{r}{1-r}n+o(n)$ for
$r\le\frac12$ (and trivially $n$ for $r\ge\frac12$).
\end{example}

\begin{example}
\refT{TI} shows that we can build any interval graph limit from a
probability distribution 
$\mu$ on 
$\cS\=\set{[x,y]:0\le x\le y\le 1}$, with the marginal distribution of $\mu$
on the $y$ axis being uniform, \ie, $\mu \in \psr$.
Here is a hierarchy of examples of building such measures
\begin{romenumerate}
\item As in \refE{EKn} for the complete graph $K_n$.
We take $\mu$ to be the uniform distribution on the
$y$ axis. Repeated picks from $\mu$ correspond to intervals
$[0,u_i]$ which all intersect.
\item 
The empty graph $E_n$ is an interval graph corresponding to disjoint
intervals. 
Let $\mu$ be the uniform distribution on the $x=y$ diagonal. 
Repeated picks from $\mu$ yield
intervals $[u_i,u_i]$ which are disjoint with probability 1. 
\item
We may interpolate between these two examples, 
choosing $a$ with $0\leq a \leq 1$ and
$\mu_a$ uniform on the line $\ell_a=\{ (x,y):
x=ay,\, 0 \leq y \leq 1 \}$.
This is done by picking intervals $[aU,U]$ with $U\sim\U(0,1)$ 
so the $y$-margin $U$ is uniform on $[0,1]$.
Now, some pairs of points on the line $\ell_a$ will result in edges 
and some not:\\
 For $[x_1,y_1]$,  $[x_2,y_2]$ in 
 $\cS$,
the intervals overlap iff 
 $x_1 \leq x_2\leq y_1$ or
  $x_2 \leq x_1\leq y_2$.
  Equivalently if  
  $x_1\leq x_2$ then $x_2\leq y_1$, or if  
 $x_1\geq x_2$ then $y_2\leq x_1$.
  Here, the points on $\ell_a$ are $[ay_1,y_1]$ and $[ay_2,y_2]$, so there
  is overlap 
 iff 
 $ ay_1 \le ay_2\le y_1$ or $ay_2 \le ay_1\le y_2$, or equivalently,
 $$ ay_1 \leq y_2 \leq y_1/a. 
 $$
 Thus the chance of an edge in this model is 
$\P(aU_1\le U_2\le U_1/a)=2\P(aU_1\le U_2\le U_1)=1-a$.

Moreover, by \refSS{SSdegrees}, the asymptotic degree distribution
$\nu(\mu_a)$ is the distribution of $W_1(U)$, where $U\sim\U(0,1)$ and
\begin{equation*}
W_1(u)=
  \begin{cases}
\bigpar{\frac1a-a}u, & u\ge a,\\
1-au, & a\le u\le 1.	
  \end{cases}
\end{equation*}
This distribution has density $a/(1-a^2)$ on $[0,1-a]$ and $1/(a(1-a^2))$ on
$[1-a,1-a^2]$ (for $a<1$).
The chromatic number is by \refT{Tchi} a.s.\ asymptotic to
$n\go(\mu_a)=(1-a)n$.

 \item
The next example of $\mu\in\psr$ is a  mixture
of uniforms on $\ell_a$, where $a$ has a distribution on $[0,1]$.
The prescription:
\begin{itemize}
\item Pick $a=A$ at random from some distribution on $[0,1]$ and 
\item independently pick uniformly on $\ell_a$,
\end{itemize}
means that we pick intervals $[A,UA]$ with $A$ and $U$ independent
and $U\sim\U(0,1)$, while $A$ has any given distribution.

\item As an extreme example, consider the measure $\mu$ which is a 
$(\theta,  1-\theta)$ 
mixture of uniform on $\ell_0,\ell_1$, with $\gth\in\oi$.
Then, identifying the vertices of
$G(n,\mu)$ with the picked points in $\sss$:
\begin{itemize}
\item None of the points on $\ell_1$ have an edge between them. 
\item All of the points on the line $\ell_0$ have edges between them.
\item Pairs of points, one from $\ell_0$, one from $\ell_1$ have an edge with probability
$1/2$, but not independently.
More precisely, there is an edge between $(0,u_1)$ and $(u_2,u_2)$ iff
$u_1\ge u_2$. 
\end{itemize}
It is easily seen that in this case, the random interval graph $\gmun$ is a
threshold graph, see \refR{Rthreshold}; we may give $(0,u)$ label $u$ and
$(u,u)$ label $-u$ and take the threshold $t=0$. 
(By \cite[Corollary 6.7]{DHJ}, $\gmun$ equals the random graph $T_{n,\theta}$
defined in \cite{DHJ}.)
Hence  $\gG_\mu$ is a
threshold graph limit in this case.
(It is an open problem to characterize all $\mu\in\ps$ such that $\gG_\mu$
is a threshold graph limit.)

\item Uniform intervals: 
As said in \refE{EU}, the uniform distribution on $\sss$ does not belong to
$\psr$, but it is equivalent to the distribution with density
$(2\sqrt{xy})^{-1}\dd x\dd y$ which does. A change of variables to
$(a,y)\in\oi^2$ with $a=x/y$ yields the density $\frac12 a^{-1/2}\dd a\dd y$,
so this is of the type studied here, with $a$ having the $B(\frac12,1)$
distribution with density $\frac12 a^{-1/2}\dd a$.
\end{romenumerate}
\end{example}

\section{Uniqueness}\label{Sunique}

We state a general equivalence theorem for representation of graph
limits (not necessarily interval graph limits) 
by symmetric measurable functions. 
We therefore allow rather general probability spaces 
$(\cS_1,\mu_1)=(\cS_2,\mu_2)$ 
and general
symmetric functions $W_i:\cS_i^2\to\oi$ on them.
In the standard case 
$(\cS_1,\mu_1)=(\cS_2,\mu_2)=(\oi,\gl)$, 
parts \ref{TUgg}--\ref{TUcut} of the theorem  are given in \cite{DJ} as a
consequence of Hoover's equivalence theorem for representations of
exchangeable arrays \citet[Theorem 7.28]{Kallenberg:exch}.
Other similar results are given by \citet{BR} and
\citet{BCL:unique}; in particular, 
\ref{TUtwin1} and \ref{TUtwin2} below are
modelled after similar results in \citet{BCL:unique}.
A similar theorem is stated in \citet{SJ249}, and 
an almost identical theorem in the related case of partial orders is given in
\citet{SJ224}. 

We first introduce more notation.
If $W_2:\cS_2^2\to\oi$ and $\gf:\cS_1\to\cS_2$, then
$W_2^\gf(x,y)\=W_2(\gf(x),\gf(y))$. 

A \emph{Borel space} is a measurable space
$(\sss,\cF)$ that is isomorphic to a Borel subset of
$\oi$, see \eg{} \cite[Appendix A1]{Kallenberg} and \citet{Parthasarathy}. 
In fact,
a Borel space is either isomorphic to $(\oi,\cB)$ or it is
countable infinite or finite. Moreover, every Borel subset of a Polish
topological space (with the Borel $\gs$-field) is a Borel
space.
A \emph{Borel probability space} is a probability space
$\sfmu$ such that $(\sss,\cF)$ is a Borel space.

If $W'$ is a symmetric function $\sssq\to\oi$, where
$\sss$ is a probability space, we say following
\cite{BCL:unique} that
$x_1,x_2\in\sss$ are \emph{twins} (for $W'$) if
$W'(x_1,y)=W'(x_2,y)$ for
\aex{} $y\in\sss$. We say that $W'$ is
\emph{almost twin-free} if there exists a null set
$N\subset\sss$ such that there are no twins
$x_1,x_2\in\sss\setminus N$ with $x_1\neq x_2$.

In the theorem and its
proof, we assume that $\oi$ is equipped with the measure $\gl$, and
$\cS_j$ with $\mu_j$; for simplicity we do not always repeat this.

\begin{theorem}\label{TU}
Suppose that $(\cS_1,\mu_1)$ and $(\cS_2,\mu_2)$ are two Borel
  probability spaces and that $W_1:\cS_1^2\to\oi$ and
  $W_2:\cS_2^2\to\oi$ are two symmetric measurable functions, and let\/
  $\gG_1,\gG_2\in\cuoo$ be the corresponding graph limits.
Then the following are equivalent.
  \begin{romenumerate}
\item\label{TUgg}
$\gG_1=\gG_2$ in\/ $\cuoo$.
\item\label{TUt}
$t(F,\gG_{1})=t(F,\gG_{2})$ for every graph $F$.
\item\label{TUgoo}
The \exch{} random infinite graphs $G(\infty,W_1)$ and $G(\infty,W_2)$
have the same distribution.
\item\label{TUgn}
The random graphs $G(n,W_1)$ and $G(n,W_2)$
have the same distribution for every finite $n$.
\item\label{TUphi}
There exist measure preserving maps $\gf_j:\oi\to\cS_j$, $j=1,2$, such that
$W_1^{\gf_1}=W_2^{\gf_2}$ \aex, \ie,
$W_1\bigpar{\gf_1(x),\gf_1(y)}=W_2\bigpar{\gf_2(x),\gf_2(y)}$ \aex{}
on $\oi^2$.
\item\label{TUpsi}
There exists a measurable mapping $\psi:\cS_1\times\oi\to\cS_2$ 
that maps $\mu_1\times\gl$ to $\mu_2$
such that
$W_1(x,y)=W_2\bigpar{\psi(x,t_1),\psi(y,t_2)}$ for \aex{}
$x,y\in\cS_1$ and  $t_1,t_2\in\oi$.
\item\label{TUcut}
$\dcut(W_1,W_2)=0$, where $\dcut$ is the cut metric defined in
  \cite{BCLSVi} (see also \cite{BR}).
  \end{romenumerate}

If further\/ $W_2$ is almost twin-free, then these are also equivalent to:
\begin{romenumerateq}
\item\label{TUtwin1}
There 
exists a measure preserving map $\gf:\sss_1\to\cS_2$ such
that
$W_1=W_2^{\gf}$ \as, \ie{}
$W_1{(x,y)}=W_2\bigpar{\gf(x),\gf(y)}$ \aex{}
on $\sss_1^2$.  
\end{romenumerateq}

If both\/ $W_1$ and\/ $W_2$ are almost twin-free, then these are also
equivalent to: 
\begin{romenumerateq}
\item\label{TUtwin2}
There exists a measure preserving map $\gf:\sss_1\to\cS_2$ such
that
$\gf$ is a bimeasurable bijection of
$\sss_1\setminus N_1$ onto $\sss_2\setminus
N_2$ for some null sets $N_1\subset\sss_1$ and
$N_2\subset\sss_2$, and 
$W_1=W_2^{\gf}$ \as, \ie{}
$W_1{(x,y)}=W_2\bigpar{\gf(x),\gf(y)}$ \aex{}
on $\sss_1^2$.  
If further $(\sss_2,\mu_2)$ has no atoms, for example if
$\sss_2=\oi$, then we may take $N_1=N_2=\emptyset$.
\end{romenumerateq}
 
\end{theorem}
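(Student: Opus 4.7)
The plan is to reduce the general statement to the standard case $\sss_1=\sss_2=(\oi,\gl)$, for which the equivalence of \ref{TUgg}--\ref{TUcut} is given in \cite{DJ} as a consequence of Hoover's representation theorem \cite{Kallenberg:exch}, and the almost twin-free statements \ref{TUtwin1}--\ref{TUtwin2} are from \cite{BCL:unique}.

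First I would dispose of \ref{TUgg}--\ref{TUgn}, which do not use any structure on the $\sss_j$ beyond that they are probability spaces. The equivalence \ref{TUgg}$\iff$\ref{TUt} is immediate from the characterization of $\cuoo$ by the functionals $t(F,\cdot)$ recalled in \refSS{SS2.4}. For \ref{TUt}$\iff$\ref{TUgn}, inclusion--exclusion applied to \eqref{t} shows that the joint distribution of the edge indicators of $G(n,W_j)$ is determined by, and determines, the numbers $t(F,\gG_j)$ for $|F|\le n$. The equivalence \ref{TUgoo}$\iff$\ref{TUgn} is Kolmogorov consistency, since the restriction of $G(\infty,W_j)$ to $[n]$ has the law of $G(n,W_j)$.

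Next I would reduce each Borel probability space $(\sss_j,\mu_j)$ to $(\oi,\gl)$. By the Borel isomorphism theorem, there exists a measure preserving map $\ga_j:(\oi,\gl)\to(\sss_j,\mu_j)$, obtained from a quantile construction after embedding $\sss_j$ into $\oi$. Setting $W_j^{*}\=W_j^{\ga_j}$, one has $G(n,W_j^{*})\eqd G(n,W_j)$ directly from \eqref{t}, and hence $\gG_{W_j^{*}}=\gG_{W_j}$. I would then apply the known equivalence of \ref{TUgg}--\ref{TUcut} to $W_1^{*}$ and $W_2^{*}$ on $\oi$ and transport the conclusions through $\ga_j$: for \ref{TUphi} by composing $\gf_j\=\ga_j\circ\gf_j^{*}$; for \ref{TUpsi} by disintegrating $\psi^{*}:\oi\times\oi\to\oi$ through the $\ga_j$, using that $\ga_j$ admits a $\mu_j$-\as{} right inverse because $\sss_j$ is Borel; and for \ref{TUcut} using that the cut metric $\dcut$ of \cite{BCLSVi,BR} extends to kernels on arbitrary probability spaces and is invariant under measure preserving pullback.

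For \ref{TUtwin1} and \ref{TUtwin2} I would apply the corresponding theorem of \cite{BCL:unique} to $W_1^{*}$ and $W_2^{*}$ on $(\oi,\gl)$ and transport via the $\ga_j$. The main obstacle is checking that the almost twin-free hypothesis on $W_j$ transfers to $W_j^{*}$: composition with $\ga_j$ could in principle create twins by collapsing distinct points, so one must arrange $\ga_j$ to be essentially injective off a null set, which is possible precisely because $\sss_j$ is Borel. The bimeasurable bijection in \ref{TUtwin2} and the assertion $N_1=N_2=\emptyset$ when $\sss_2$ is atomless then follow from combining the $\oi$-case result with a second application of the Borel isomorphism theorem.
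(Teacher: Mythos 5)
Your reduction of \ref{TUgg}--\ref{TUcut} to the standard case $(\oi,\gl)$ via measure preserving maps $\ga_j:\oi\to\cS_j$ supplied by the Borel isomorphism theorem is essentially the paper's argument, and your more self-contained treatment of \ref{TUgg}--\ref{TUgn} (inclusion--exclusion plus Kolmogorov consistency) is fine. The genuine gap is in your treatment of \ref{TUtwin1} and \ref{TUtwin2}. You propose to apply the twin-free uniqueness theorem of Borgs--Chayes--Lov\'asz to the pulled-back kernels $W_j^{*}=W_j^{\ga_j}$ on $\oi$, and you correctly identify that this requires $W_2^{*}$ (resp.\ $W_1^{*}$) to be almost twin-free; but your claim that $\ga_j$ can be arranged to be ``essentially injective off a null set'' because $\cS_j$ is Borel is false whenever $\mu_j$ has an atom. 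An atom of mass $p>0$ forces $\ga_j$ to collapse a set of Lebesgue measure $p$ to a single point, and then every pair of points of that set consists of twins for $W_j^{*}$, so $W_j^{*}$ is never almost twin-free in that case. Atoms are explicitly allowed here (indeed \ref{TUtwin2} singles out the atomless case as a special subcase), so the transfer step breaks down exactly where the statement has content beyond the $\oi$-case. The paper instead proves \ref{TUpsi}$\implies$\ref{TUtwin1} and \ref{TUtwin1}$\implies$\ref{TUtwin2} directly on $\cS_1,\cS_2$: almost twin-freeness of $W_2$ forces $\psi(x,t)$ to be independent of $t$ for almost every $(x,t)$, which produces $\gf$; almost twin-freeness of $W_1$ forces $\gf$ to be injective off a null set; and only then is the Borel structure invoked, to conclude that an injective measurable map between Borel spaces is a bimeasurable bijection onto a measurable set of full measure. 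You would need to replace your transfer step by such a direct argument (or by a purification argument that first quotients out twins, which amounts to the same thing).

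A secondary imprecision concerns \ref{TUpsi}: you invoke ``a $\mu_1$-a.s.\ right inverse'' of $\ga_1$. A measurable section of $\ga_1$ exists by Borel selection, but it cannot be measure preserving when $\ga_1$ is non-injective, so it does not by itself produce a $\psi$ carrying $\mu_1\times\gl$ to $\mu_2$. What is needed (and what the paper uses, citing a lemma from the companion poset-limits paper) is a measure preserving \emph{randomized} inverse $\cS_1\times\oi\to\oi$ whose composition with $\ga_1$ recovers the first coordinate a.e.; its auxiliary coordinate is then absorbed, together with the randomization of the $\oi$-case map, into the variable $t$ of $\psi$. Your word ``disintegrating'' suggests you have this in mind, but as written the step is not justified.
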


Note that \ref{TUgg}$\implies$\ref{TUgn} implies that we can uniquely define
the random graphs 
$G(n,\gG)$ for any graph limit $\gG$. 

\begin{proof}
\ref{TUgg}$\iff$\ref{TUt} holds by our definition of graph limits.

Next, the equivalences
\ref{TUgg}$\iff$\ref{TUt}$\iff$\ref{TUgoo}$\iff$\ref{TUgn}$\iff$\ref{TUphi}$\iff$\ref{TUpsi}$\iff$\ref{TUcut}
where shown in \cite{DJ} in the special (but standard) case
$(\cS_1,\mu_1)=(\cS_2,\mu_2)=(\oi,\gl)$.
Since every Borel space is either finite, countably infinite or
(Borel) isomorphic to $\oi$, it is easily seen that there exist
measure preserving maps $\gam_j:\oi\to\cS_j$, $j=1,2$.
Then $W_j^{\gam_j}:\oi^2\to\oi$, and 
it is easily seen that $\gG_j\=\gG_{W_j}=\gG_{W_j^{\gam_j}}$ and 
$G(n,W_j)\eqd G(n,W_j^{\gam_j})$ for $n\le\infty$, 
and further $\dcut(W_j,W_j^{\gam_j})=0$;
hence
\ref{TUgg}$\iff$\ref{TUt}$\iff$\ref{TUgoo}$\iff$\ref{TUgn}$\iff$\ref{TUcut} 
by the corresponding
results for $\oi$. 

If \ref{TUgg}--\ref{TUgn} 
hold, then by \ref{TUphi}
for $\oi$, there exist measure preserving functions $\gf'_j:\oi\to\oi$
such that 
$\tWx1\bigpar{\gf'_1(x),\gf'_1(y)}=\tWx2\bigpar{\gf'_2(x),\gf'_2(y)}$ \aex{},
and thus \ref{TUphi} holds with $\gf_j\=\gam_j\circ\gf'_j$.

Conversely, if \ref{TUphi} holds, then
$G(n,W_1)\eqd G(n,W_1^{\gf_1})= G(n,W_2^{\gf_2})\eqd G(n,W_2)$ for
every $n\le\infty$; thus \ref{TUphi}$\implies$\ref{TUgoo},\ref{TUgn}.

\ref{TUpsi}$\implies$\ref{TUgoo},\ref{TUgn} is similar.

\ref{TUgoo}$\implies$\ref{TUpsi}: Assume \ref{TUgoo}. Then 
$G(\infty,\tWx1)\eqd G(\infty,\tWx2)$, so by the result for $\oi$,
there exists a measure preserving function $h:\oi^2\to\oi$ such that
$\tWx1(x,y)=\tWx2\bigpar{h(x,z_1),h(y,z_2)}$ for \aex{}
$x,y,z_1,z_2\in\oi$. 
By \cite[Lemma 7.2]{SJ224} (applied to $(\cS_1,\mu_1)$ and $\gam_1$),
there exists a measure preserving map $\ga:\cS_1\times\oi\to\oi$ such that
$\gam_1(\ga(s,u))=s$ a.e.
Hence, for \aex{} $x,y\in\cS_1$ and $u_1,u_2,z_1,z_2\in\oi$,
\begin{equation*}
  \begin{split}
W_1(x,y)
&=
W_1\bigpar{\gam_1\circ \ga(x,u_1),\gam_1\circ \ga(y,u_2)}	
=
\tWx1\bigpar{ \ga(x,u_1), \ga(y,u_2)}	
\\&
=
\tWx2\bigpar{h(\ga(x,u_1),z_1),h(\ga(y,u_2),z_2)}
\\&
=
W_2\bigpar{\gam_2\circ h(\ga(x,u_1),z_1),\gam_2\circ h(\ga(y,u_2),z_2)}.
  \end{split}
\end{equation*}
Finally, let $\beta=(\beta_1,\beta_2)$ be a measure preserving map
$\oi\to\oi^2$, 
and define
$\psi(x,t)\=\gam_2\circ{h\bigpar{\ga(x,\beta_1(t)),\beta_2(t)}}$.

\ref{TUpsi}$\implies$\ref{TUtwin1}:
Since, for \aex{} $x,y,t_1,t_2,t_1'$,
\begin{equation*}
W_2\bigpar{\psi(x,t_1),\psi(y,t_2)}=
W_1(x,y)=W_2\bigpar{\psi(x,t_1'),\psi(y,t_2)}  
\end{equation*}
and $\psi$ is  
measure preserving, it follows that for \aex{}
$x,t_1,t_1'$, $\psi(x,t_1)$ and $\psi(x,t_1')$
are twins for $W_2$. If $W_2$ is almost twin-free, with exceptional null
set $N$, then further
$\psi(x,t_1),\psi(x,t_1')\notin N$ for \aex{}
$x,t_1,t_1'$, since $\psi$ is measure preserving, and consequently
$\psi(x,t_1)=\psi(x,t_1')$ for \aex{} $x,t_1,t_1'$.
It follows that we can choose a fixed $t_1'$ (almost every choice
will do) such that 
$\psi(x,t)=\psi(x,t_1')$ for \aex{} $x,t$. Define
$\gf(x)\=\psi(x,t_1')$.
Then $\psi(x,t)=\gf(x)$ for \aex{} $x,t$, which in particular
implies that $\gf$ is measure preserving, and \ref{TUpsi}
yields $W_1(x,y)=W_2\bigpar{\gf(x),\gf(y)}$ a.e.

\ref{TUtwin1}$\implies$\ref{TUtwin2}:
Let $N'\subset\sss_1$ be a null set such that if
$x\notin N'$, then $W_1(x,y)=W_2(\gf(x),\gf(y))$
for \aex{} $y\in\sss_1$.
If
$x,x'\in\sss_1\setminus N'$ and
$\gf(x)=\gf(x')$, then $x$ and $x'$ are twins
for $W_1$. Consequently, if $W_1$ is almost twin-free
with exceptional null set $N''$, then $\gf$ is injective
on $\sss_1\setminus N_1$ with $N_1\=N'\cup N''$. 
Since $\sss_1\setminus N_1$ and
$\sss_2$ are Borel spaces, the injective map
$\gf:\sss_1\setminus N_1\to\sss_2$ has measurable
range and is a bimeasurable bijection 
$\gf:\sss_1\setminus N_1\to\sss_2\setminus N_2$
for some measurable set $N_2\subset\sss_2$. Since
$\gf$ is measure preserving, $\mu_2(N_2)=0$. 
 
If $\sss_2$ has no atoms, we may take an uncountable null set
$N_2'\subset\sss_2\setminus N_2$. Let
$N_1'\=\gf\qw(N_2')$. Then $N_1\cup N_1'$ and
$N_2\cup N_2'$ are uncountable Borel spaces so there is a
bimeasurable bijection $\eta:N_1\cup N_1'\to N_2\cup
N_2'$. Redefine $\gf$ on $N_1\cup N_1'$ so that
$\gf=\eta$ there; then $\gf$ becomes a bijection $\sss_1\to\sss_2$.

\ref{TUtwin1},\ref{TUtwin2}$\implies$\ref{TUphi}: Trivial. 
\end{proof}

We apply this general theorem to the case $\sss_1=\sss_2=\sss$ and $W_1=W_2=W$.

\begin{corollary}
  \label{Cunique}
Let $\mu_1,\mu_2\in\ps$.
Then, $\gG_{\mu_1}=\gG_{\mu_2}$ if and only if there exists a
measurable map $\psi:\cS\times\oi\to\cS$ that maps
$\mu_1\times\gl\to\mu_2$ such that for $\mu_1$-\aex{} intervals
$I,J\in\cS$ and \aex{} $t,u\in\oi$,
\begin{equation}\label{un}
  I\cap J\neq\emptyset
\iff
\psi(I,t)\cap\psi(J,u)\neq\emptyset.
\end{equation}
\end{corollary}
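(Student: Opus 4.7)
The plan is to apply Theorem~\ref{TU} in the special case $\sss_1=\sss_2=\sss$ and $W_1=W_2=W$, where $W$ is the intersection indicator from \eqref{w}. First I would verify the structural hypothesis of the theorem: since $\sss$ is a closed triangle in $\bbR^2$, it is a Borel subset of a Polish space and hence $(\sss,\cF,\mu)$ is a Borel probability space for every $\mu\in\ps$; the function $W$ is manifestly measurable (in fact it is \oivalued{} with discontinuity set described in \refL{LDW}). Thus both $(\sss,\mu_1,W)$ and $(\sss,\mu_2,W)$ satisfy the setup of \refT{TU}.

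Next I would invoke the equivalence \ref{TUgg}$\iff$\ref{TUpsi} in \refT{TU}. The assertion \ref{TUgg} specializes to $\gG_{\mu_1}=\gG_{\mu_2}$, which is the left-hand side of the corollary. The assertion \ref{TUpsi} specializes to the existence of a measurable map $\psi:\sss\times\oi\to\sss$ pushing $\mu_1\times\gl$ forward to $\mu_2$ such that
\begin{equation*}
  W(I,J)=W\bigpar{\psi(I,t),\psi(J,u)}
\qquad\text{for \aex{} }I,J\in\sss,\ t,u\in\oi.
\end{equation*}

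Since $W$ takes only the values $0$ and $1$, with $W(I,J)=1$ iff $I\cap J\neq\emptyset$, the displayed identity is logically equivalent, for each admissible quadruple $(I,J,t,u)$, to the biconditional
\begin{equation*}
  I\cap J\neq\emptyset\iff \psi(I,t)\cap\psi(J,u)\neq\emptyset,
\end{equation*}
which is exactly \eqref{un}. This yields both directions of the stated equivalence and completes the argument. There is no real obstacle here, since all of the analytic and combinatorial content has been placed in \refT{TU}; the only thing to check is the trivial translation between the functional equation for $W$ and the set-theoretic biconditional \eqref{un}, together with the observation that $\sss$ is a Borel space.
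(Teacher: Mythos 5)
Your proposal is correct and is exactly the paper's argument: the corollary is stated as the specialization of Theorem~\ref{TU}, equivalence \ref{TUgg}$\iff$\ref{TUpsi}, to $\sss_1=\sss_2=\sss$ and $W_1=W_2=W$, with the Borel-space hypothesis satisfied because $\sss$ is a compact subset of $\bbR^2$. The translation of the a.e.\ identity $W(I,J)=W\bigpar{\psi(I,t),\psi(J,u)}$ into the biconditional \eqref{un} via the \oivalued{} nature of $W$ is precisely the intended (and only) remaining step.
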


This result is still not completely satisfactory, and it leads to a number
of open questions:
\begin{problems}\label{PPunique}
\begin{thmenumerate}
\item 
The simple case is when the mapping $\psi$ in \refC{Cunique} does not depend
on the second variable at all; in other words, when there exists a
measurable  map $\gf:\sss\to\sss$ that maps $\mu_1$ to $\mu_2$ 
such that for $\mu_1$-\aex{} intervals
$I,J\in\cS$,
\begin{equation}\label{un0}
  I\cap J\neq\emptyset
\iff
\gf(I)\cap\gf(J)\neq\emptyset.
\end{equation}
When is this possible, and when is the extra randomization in \eqref{un}
really needed?

\item 
To simplify the condition further, when is it possible to choose $\psi$ or
$\gf$ such that 
\eqref{un} or \eqref{un0} hold for \emph{all} $I$ and $J$, and not just
almost all?
Note that in \refE{E2}, the two different representing measures are related
by the map $\gf$ defined by $\gf([x,y])=[x+1-a,y+1-a]$ for $y\le a$
and $\gf([x,y])=[x-a,y-a]$ for $y>x\ge a$, and arbitrarily for $x<a<y$;
this $\gf$ satisfies \eqref{un0} for \aex{} $I$ and $J$, but not for all.

\item 
One way to obtain a map $\gf:\sss\to\sss$ that satisfies \eqref{un} for all
$I$ and $J$ is to take 
$\gf([a,b])=[f(a),f(b)]$ for a (strictly) increasing map $f:\oi\to\oi$,
or $\gf([a,b])=[f(b),f(a)]$ for a (strictly) decreasing map $f:\oi\to\oi$.
Are there any other such maps $\gf$?
Again, note that in Examples \ref{E2} and \ref{E3} there are natural maps $\gf$
that satisfy \eqref{un0} for \aex{} $I$ and $J$, but these are given by
functions $f$ that permute subintervals of $\oi$, and are not monotone.  
It seems that this problem is related to connectedness of the random
interval graphs $G(n,\mu_1)$, and also to the question whether there
are several orientations of the complement of these interval graphs,
\cf{} \cite{Fishburn}.
\end{thmenumerate}
\end{problems}

\begin{problem}\label{Pcanonical}
  Is there some additional condition on $\mu$ that leads to a unique
``canonical''  representing measure $\mu\in\cP(\sss)$ 
for each interval graph limit $\gG$? 
\end{problem}

Note that requiring $\mu\in\psl$ yields uniqueness in \refE{EKn} but not in
\refE{E2}. 

\section{Proof of \refT{Tchi}}\label{Spfgo}

We begin by proving a special case. 
\begin{lemma}
  \label{Lgon}
Let $\mu\in\cP(\sss)$. Then
$
\frac1n\go(G(n,\mu))\asto\go(\mu)$
as \ntoo.
\end{lemma}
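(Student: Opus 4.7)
The plan is to translate the clique number into the supremum of an empirical measure and then prove a uniform law of large numbers adapted to this family of sets. With $X_i=[L_i,R_i]$ the iid intervals sampled from $\mu$, $\hat\mu_n\=n\qw\sum_{i=1}^n\gd_{X_i}$ their empirical measure, and $A_a\=\{I\in\sss:a\in I\}$, the interval-graph identity $\go(G)=\max_{a\in\oi}\#\{i:a\in I_i\}$ recalled in the paragraph preceding \eqref{go} reduces the claim to
\begin{equation*}
\sup_{a\in\oi}\hat\mu_n(A_a) \asto \sup_{a\in\oi}\mu(A_a)=\go(\mu).
\end{equation*}

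For the lower bound a single pointwise SLLN suffices. Given $\eps>0$, I pick $a^*\in\oi$ with $\mu(A_{a^*})>\go(\mu)-\eps$. Since the $\ett{a^*\in X_i}$ are iid Bernoulli with mean $\mu(A_{a^*})$, the classical SLLN gives $\hat\mu_n(A_{a^*})\asto\mu(A_{a^*})$, and hence $\liminf_n\sup_a\hat\mu_n(A_a)\ge\go(\mu)-\eps$ almost surely. Intersecting the null sets corresponding to $\eps=1/m$ yields $\liminf\ge\go(\mu)$ a.s.

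The upper bound is the crux, and its key input is the one-sided monotonicity
\begin{equation*}
A_a\setminus A_b \subseteq \bigset{[L,R]\in\sss:L\in(b,a]},\qquad b\le a,
\end{equation*}
which is immediate from the definition. Given $\eps>0$, I would choose a finite partition $0=a_0<a_1<\dots<a_K=1$ such that $\mul\bigpar{(a_{k-1},a_k)}<\eps$ for every $k$. This is possible because $\mul$ has only finitely many atoms of mass $\ge\eps$; inserting each such atom as a breakpoint guarantees none lies in an open piece $(a_{k-1},a_k)$, and between these atoms a further finite refinement is possible by the right-continuity of the distribution function of $\mul$. For any $a\in(a_{k-1},a_k)$ the inclusion above yields $\hat\mu_n(A_a)\le\hat\mu_n(A_{a_{k-1}})+\hat\mu_{nL}\bigpar{(a_{k-1},a_k)}$, whence
\begin{equation*}
\sup_{a\in\oi}\hat\mu_n(A_a)\le \max_{0\le k\le K}\hat\mu_n(A_{a_k}) + \max_{1\le k\le K}\hat\mu_{nL}\bigpar{(a_{k-1},a_k)}.
\end{equation*}
Each of the finitely many terms on the right is the empirical mean of iid $\{0,1\}$-variables, so the classical SLLN together with $\mu(A_{a_k})\le\go(\mu)$ and the choice of partition gives $\limsup_n\sup_a\hat\mu_n(A_a)\le\go(\mu)+\eps$ almost surely; letting $\eps=1/m\to0$ along a countable sequence completes the upper bound and, combined with the lower bound, the lemma.

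The main (modest) obstacle is the partition step: a naive $\eps$-fine partition of $\oi$ in terms of $\mul$-mass fails when $\mul$ has atoms of mass $\ge\eps$, so such atoms must be isolated at breakpoints and the surrounding pieces taken to be open intervals. Apart from this, everything is a routine finite application of the SLLN once the set-theoretic one-sided inclusion for $A_a$ is in hand.
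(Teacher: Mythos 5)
Your proof is correct and takes essentially the same route as the paper: both reduce the supremum over uncountably many $a$ to finitely many applications of the SLLN via a bracketing of the sets $A_a=[0,a]\times[a,1]$, the only (cosmetic) difference being that the paper chooses its finite family by a compactness/open-cover argument with two-sided enlargements $[0,a+\gd]\times[a-\gd,1]$ whose $\mu$-mass is controlled by continuity of measure, whereas you use a partition of $\oi$ adapted to the left marginal $\mul$ together with the one-sided inclusion $A_a\setminus A_b\subseteq\set{[L,R]:L\in(b,a]}$. Your partition-existence step is stated a bit informally but is a standard fact (e.g.\ by the greedy choice $a_k=\inf\set{t>a_{k-1}:\mul((a_{k-1},t])\ge\eps/2}$), so there is no gap.
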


\begin{proof}
Recall the construction of $G(n,\mu)$ using i.i.d.\
random intervals $I_1,\dots, I_n$ with distribution $\mu$, and let again 
$\mu_n=\frac1n\sum_1^n \gd_{I_i}$ be the corresponding empirical measure.

Let $\eps>0$.
Choose $a$ such that 
$\go(\mu)= \mu\bigpar{[0,a]\times[a,1]}$.
By the law of large numbers, a.s.\ for all large $n$,
\begin{equation}\label{em>}
  \mu_n\bigpar{[0,a]\times[a,1]}
=\frac1n\#\bigset{i\le n:I_i\in[0,a]\times[a,1]}
>\go(\mu)-\eps.
\end{equation}

In the opposite direction, for every $a\in\oi$, 
$\mu\bigpar{[0,a]\times[a,1]}<\go(\mu)+\eps$, 
and thus, for some $\gd=\gd(a)>0$,
$\mu\bigpar{[0,a+\gd]\times[a-\gd,1]}<\go(\mu)+\eps$.
The open intervals $(a-\gd(a),a+\gd(a))$ cover the compact set $\oi$, so we
can choose a finite subcover $(a_j-\gd_j,a_j+\gd_j)$, $j=1,\dots,m$.
By the law of large numbers, a.s.\ for all large $n$,
$\#\set{i\le n:I_i\in[0,a_j+\gd_j]\times[a_j-\gd_j,1]}<n(\go(\mu)+\eps)$
for each $j=1,\dots,m$, which implies that  
\begin{equation}\label{em<}
  \mu_n\bigpar{[0,a]\times[a,1]}
=\frac1n\#\bigset{i\le n:I_i\in[0,a]\times[a,1]}
<\go(\mu)+\eps
\end{equation}
for every $a\in\oi$. Combining \eqref{em>} and \eqref{em<}, we see that
$\go(\mu)-\eps < \go(\mu_n)<\go(\mu)+\eps$, and the result follows since
$\frac1n\go(G(n,\mu))=\go(\mu_n)$. 
\end{proof}

\begin{proof}[Proof of \refL{Lgo=}]
  By \refT{TU}\ref{TUgg}$\implies$\ref{TUgn}, the random graphs $G(n,\mu_1)$
  and $G(n,\mu_2)$ have the same distribution and the result follows by
  \refL{Lgon}. 
\end{proof}

A direct analytic proof of \refL{Lgo=}
using e.g.\ \refC{Cunique} seems more difficult than
this  argument using random graphs.

\begin{proof}[Proof of \refT{Tchi}]
As in the proof of \refT{TI}, we may (by considering a subsequence)
assume that $G_n$ is defined by intervals 
$I\nj=[a\nj,b\nj]\subseteq\oi$, $i=1,\dots,n$
such that the corresponding empirical measures $\mu_n$ given by \eqref{mun}
converge to a measure $\mu\in\psm$. By \refT{TIG}, $\gG_\mu=\gG$.

Let $a_n\in\oi$ be such that $\go(\mu_n)=\mu_n\bigpar{[0,a_n]\times[a_n,1]}$. 
By considering a further subsequence we may assume that $a_n\to a$ for some
$a\in\oi$. Since $\mu\in\psm$, $\mul$ and $\mur$ are continuous measures and
thus
$\mu\bigpar{\partial([0,b]\times[b,1])}
=\mu\bigpar{[0,b]\times\set{b}\cup\set{b}\times[b,1]}=0$
for every $b\in\oi$. Together with $\mu_n\to\mu$, this implies
\begin{equation}
\mu\bigpar{[0,b]\times[b,1]}
=\lim_\ntoo \mu_n\bigpar{[0,b]\times[b,1]}
\le \liminf_\ntoo \go(\mu_n).
\end{equation}
Moreover, 
a routine argument shows that
\begin{equation}
  \go(\mu_n)=
\mu_n\bigpar{[0,a_n]\times[a_n,1]}
\to \mu\bigpar{[0,a]\times[a,1]}.
\end{equation}
Consequently, $\go(\mu)=\mu\bigpar{[0,a]\times[a,1]}$ and
$\go(\mu_n)\to\go(\mu)=\go(\gG)$.
The result follows for the subsequence since
$\chi(G_n)=\go(G_n)=n\go(\mu_n)$.
The same argument applies to every subsequence of $G_n$, which thus has a
subsubsequence such that \eqref{tchi} holds; this implies that \eqref{tchi}
holds for the full sequence.
\end{proof}

\section{Other intersection graphs}\label{Sother}

The methods above can be used also for some other classes of intersection
graphs.
In general, for \aig{s} defined using a collection $\cA$ of
sets, we define $W=W_\cA:\cA\times\cA\to\setoi$ by   
\begin{equation}\label{wa}
 W(A,B)=
 \begin{cases}
1 & \text{if } A\cap B\neq\emptyset,\\
0 & \text{if } A\cap B=\emptyset.
 \end{cases}
\end{equation}
We take $\sss=\cA$ 
(equipped with some suitable $\gs$-field) 
and use this fixed
function $W$, just as for the case of interval graphs above. 
 If $\mu$
is any probability measure on $\sss=\cA$, then the random graphs $G(n,\mu)$
are random \aig{s} (and each $\mu$ gives a model of such random graphs); thus
the graph limit $\gG_\mu$ is an \aigl.
The problem whether the converse holds, 
\ie, whether every \aigl{} can be represented as $\gG_\mu$ for some such $\mu$,
is more subtle; we have proved it for interval graphs above,
and our methods apply also to some other cases, 
see Sections \ref{SSCA}--\ref{SSpg} below;
however, the converse is not true in general, see \refSS{SSuig}.
(For a more trivial counterexample, 
let $\cA$ be the countable family of all finite 
subsets of $\bbN$; then every graph is an \aig, but not every graph limit
can be represented by $\gG_\mu$ for a measure $\mu$ on $\cA$, since 
this would imply that the class of all graphs is random-free, see
\refR{Rrandomfree}, a contradiction.)

We leave the general case as an open problem and remark that our methods
seem to work best when the set $\cA$ has a compact topology; however, even
in that case there are problems because the map $\mu\to\gG_\mu$ is in
general not continuous, as seen in \refT{Tcont}.

\begin{problem}
  Find general conditions on $\cA$ that guarantee that every \aigl{} is
  $\gG_\mu$ for some $\mu\in\cP(\cA)$.
\end{problem}

We study a few cases individually. 
Note that the function $W$ depends
on the graph class by the general formula \eqref{wa}.
For each class one can ask questions similar to 
Problems \ref{PPunique}--\ref{Pcanonical}, study random graphs $G(n,\gG)$
generated by suitable graph limits, and so on; we leave this to the readers.

\subsection{Circular-arc graphs}\label{SSCA}
\emph{Circular-arc graphs} are the intersection graphs defined by letting
$\cA$ be the collection of arcs on the unit circle $\bbT$,
see \cite{Brand,Golumbic,Klee}.
As for interval graphs, we may assume that the arcs are closed, and we allow
arcs of length 0.
We also allow the whole circle as an arc; this is special since it has no
endpoint. This class obviously contain the interval graphs, and the
containment is strict. (For example, the cycle $C_n$ with $n\ge4$ is a
circular-arc graph but not an interval graph.)

For technical reasons, we first regard the whole circle as having two
coinciding (and otherwise arbitrary) endpoints. The space of arcs may then
be identified with
$\scao\=[0,2\pi]\times\bbT$, with $(\ell,e^{\ii\gth})$ corresponding to the
arc \set{e^{\ii t}:t\in[\gth,\gth+\ell]} of length $\ell$.
The argument in the proof of \refT{TI} shows that every circular-arc graph
limit may be represented as $\gG_\mu$ for some measure $\mu\in\cP(\scao)$,
for example with the marginal distribution of $\gth$ uniform on $\bbT$.

To get rid of the artificial endpoints for the full circle, we identify all
points $(2\pi,e^{\ii\gth})$ in $\scao$ and let $\sca$ be the resulting
quotient space; $\sca$ is homeomorphic to the unit disc
$D\=\set{z\in\bbC:|z|\le1}$ with $re^{\ii\gth}\in D$ corresponding to
$(2\pi(1-r),e^{\ii\gth})\in\scao$ and thus $0\in D$ corresponding to the
full circle. (This gives a unique representation of the closed arcs on $\bbT$.)
The quotient map $\scao\to\sca$ preserves $W$, so by mapping
$\mu$ from $\scao$ to $\sca$, we see that the circular-arc graph limits are
exactly the graph limits $\gG_\mu$ for $\mu\in\cP(\sca)$, in analogy with
\refT{TI} for interval graphs. (The main reason that we do not use $\sca$
directly in the proof is that $W$ is not continuous at pairs $(I,J)$ where
$I=\bbT$ and $J$ has length 0.)

\subsection{Circle graphs}\label{SScircle}

\emph{Circle graphs} are the intersection graphs defined by the collection
of chords of the unit circle $\bbT$ \cite[Chapter 11]{Golumbic}. 
We represent a chord by its two endpoints, and first for convenience
consider the endpoints as an ordered pair of points. 
We thus consider the space $\scgo\=\bbT\times\bbT$ (allowing chords of
length 0).
The argument in the proof of \refT{TI} shows that every circle graph
limit may be represented as $\gG_\mu$ for some measure $\mu\in\cP(\scgo)$,
for example with the average of the two marginal distributions on $\bbT$ being 
uniform (in analogy with $\psm$). 

The space of all chords on $\bbT$ really is the quotient space $\scg$ of
$\scgo$ obtained by identifying $(a,b)$ and $(b,a)$ for any $a,b\in\bbT$.
(The resulting compact space is homeomorphic to a M\"obius strip.)
Again, the quotient mapping preserves $W$, so we can map $\mu\in\cP(\scgo)$
to a measure on $\scg$. Consequently, the circle graph limits are the graph
limits $\gG_\mu$ for $\mu\in\cP(\scg)$.

\subsection{Permutation graphs}\label{SSpg}

A graph is a \emph{permutation graph} if we can label the vertices by
$1,\dots,n$ and there is a permutation $\pi$ of $\set{1,\dots n}$ such that
for $i<j$ there is an edge $ij$ if and only if $\pi(i)>\pi(j)$.
It is easy to see that the
{permutation graphs} are the intersection graphs defined by the collection
of all line segments with one endpoint on each of two parallel lines; we may
take $\cA=\oi\times\oi$ with $(a,b)$ representing the line segment between
$(a,0)$ and $(b,1)$ \cite[Chapter 7]{Golumbic}. 

The argument in the proof of \refT{TI} shows that every permutation graph
limit may be represented as $\gG_\mu$ for some measure $\mu\in\cP(\oi^2)$,
for example with the two marginal distributions on $\oi$ both being 
uniform.

\subsection{Unit interval graphs}\label{SSuig}

\emph{Unit interval graphs} are the intersection graphs defined by the
collection $\cA=\set{[x,x+1]:x\in\bbR}$ of unit intervals in $\bbR$.
(Again, we choose the intervals as closed; the collection of open unit
intervals defines the same class of graphs.)
This class coincides with the class of \emph{proper interval graphs},
defined by collections of intervals $I_1,\dots,I_n$ in $\bbR$, with the
additional requirement that no $I_i$ is a proper subinterval of another.
(Or, equivalently, that $I_i\not\subseteq I_j$ for all $i,j$.)
They are also called \emph{indifference graphs}. 
See \cite{Brand,Golumbic,Roberts}.
This is a subclass of all interval graphs and the containment is strict
since $K_{1,3}$ is an interval graph but not a unit interval graph.

The set $\cA$ above is naturally identified with $\bbR$,
with $W(x,y)=1$ when $|x-y|\le1$; thus every
probability measure on $\bbR$ defines a unit interval graph limit.
However, this mapping is \emph{not} onto. In fact, the empty graph $E_n$ is
a unit interval graph, so the limit as \ntoo{} is a unit interval graph
limit; this graph limit $\gG_0$ is defined by the kernel 0 on any
probability space and has $t(K_2,\gG_0)=0$, but if $\mu\in\cP(\bbR)$, then
the corresponding graph limit $\gG_\mu$ has by \eqref{t}
\begin{equation*}
  t(K_2,\gG_\mu)=\iint_{|x-y|\le1}\dd\mu(x_1)\dd\mu(x_2)>0.
\end{equation*}
Thus $\gG_0\neq\gG_\mu$. (Note that if $\mu_n\in\cP(\bbR)$ is a measure
representing 
$E_n$, then necessarily the sequence $\mu_n$ is not tight, and in fact
converges vaguely to 0, so this problem is connected to the non-compactness
of $\bbR$.)

Another approach to unit interval graph limits is to regard them as special
cases of interval graph limits and use the theory developed above to
characterize them using special measures on the triangle 
$\sss=\set{[a,b]:0\le a\le b\le 1}$.
This yields the following theorem.
\begin{theorem}\label{Tuig}
A graph limit\/ $\gG$ is a unit interval graph limit if and only if 
$\gG=\gG_\mu$ for a measure $\mu\in\cP(\sss)$ that has support on some curve 
$t\mapsto\gam(t)=(\gam_1(t),\gam_2(t))\in\sss$ such that $\gam_1(t)$ and
$\gam_2(t)$ are weakly increasing.
\end{theorem}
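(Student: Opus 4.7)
The plan has two directions, both mirroring the empirical-measure approach used in the proof of \refT{TI} but with the key structural observation that unit intervals, once sorted by left endpoint, are simultaneously sorted by right endpoint.

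\textbf{Forward direction (``only if'').} Assume $G_n\to\gG$ with each $G_n$ a unit interval graph. Represent $G_n$ by unit intervals $[x_{n,i},x_{n,i}+1]$ in $\bbR$, $i=1,\dots,n$, indexed so $x_{n,1}\le\dots\le x_{n,n}$ (after a small perturbation we may assume the $2n$ endpoints are distinct). Choose an increasing homeomorphism $\gf_n:\bbR\to\bbR$ that sends $x_{n,i}$ to $(i-1)/n$ and maps the range of all endpoints into $\oi$. The transformed intervals $[\gf_n(x_{n,i}),\gf_n(x_{n,i}+1)]\in\sss$ represent the same graph $G_n$, and \emph{both} coordinate sequences are weakly increasing in $i$. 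Let $\mu_n\in\psl$ be the empirical measure \eqref{mun}. By compactness of $\psl$, pass to a subsequence with $\mu_n\to\mu\in\psl$; since $\mu_L=\gl$ is continuous, \refT{TIG} gives $\gG=\gG_\mu$.

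\textbf{Chain structure and parameterization.} Next I would show that $\supp\mu$ is a chain in the coordinatewise partial order on $\sss$. If not, pick $(a,b),(a',b')\in\supp\mu$ with $a<a'$ and $b>b'$ and take small disjoint open neighborhoods $U,U'\subset\sss$ separated in both coordinates. Then $\mu(U),\mu(U')>0$, so by the portmanteau theorem $\mu_n(U),\mu_n(U')>0$ for large $n$, forcing a pair of atoms of $\mu_n$ that violate the monotone ordering of $\supp\mu_n$ already established in the previous step, a contradiction. On the chain $\supp\mu$, the midpoint map $\pi([a,b])\=(a+b)/2$ is strictly increasing (since $(a,b)<(a',b')$ in the product order gives $a+b<a'+b'$), continuous, and therefore a homeomorphism onto a compact set $K\subset\oi$. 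Extend $\pi\qw:K\to\sss$ to a continuous map $\gam:\oi\to\sss$ by linear interpolation across the (countably many) complementary open intervals of $K$; both $\gam_1,\gam_2$ remain weakly increasing, and $\supp\mu\subseteq\gam(\oi)$, as required.

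\textbf{Reverse direction (``if'').} Assume $\mu$ is supported on the image of a curve $\gam=(\gam_1,\gam_2)$ with both components weakly increasing. Since $G(n,\mu)\to\gG_\mu$ a.s., it suffices to show $G(n,\mu)$ is a.s.\ a unit interval graph. By the Roberts characterization, unit interval graphs are precisely the interval graphs containing no induced claw $K_{1,3}$; clearly $G(n,\mu)$ is an interval graph, so we need only exclude induced claws. Suppose $v,u_1,u_2,u_3$ form an induced $K_{1,3}$ with curve parameters $T_v,T_1,T_2,T_3$; WLOG $T_1<T_2<T_3$. On a chain, $I_i\cap I_j\neq\emptyset$ for $T_i\le T_j$ iff $\gam_1(T_j)\le\gam_2(T_i)$. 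The non-edges $u_1u_2$ and $u_2u_3$ give $\gam_2(T_1)<\gam_1(T_2)$ and $\gam_2(T_2)<\gam_1(T_3)$. If $T_v\le T_2$, then $v\sim u_3$ forces $\gam_1(T_3)\le\gam_2(T_v)\le\gam_2(T_2)$, contradicting $\gam_2(T_2)<\gam_1(T_3)$. If $T_v>T_2$, then $v\sim u_1$ forces $\gam_1(T_v)\le\gam_2(T_1)$, contradicting $\gam_1(T_v)\ge\gam_1(T_2)>\gam_2(T_1)$. So no induced claw exists, and $G(n,\mu)$ is a.s.\ a unit interval graph.

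The main obstacle is the structural step converting the chain $\supp\mu$ into a weakly increasing continuous curve defined on $\oi$; the midpoint projection $\pi$ handles this cleanly, but the gap-filling extension requires a little care to maintain continuity and weak monotonicity across the complement of $K$.
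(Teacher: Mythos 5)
Your argument is correct. The forward direction is essentially the paper's own: sort the unit intervals by left endpoint (so both endpoint sequences are monotone), pass to a weak limit of the empirical measures, deduce that $\supp\mu$ is a chain in the coordinatewise order by pushing the "no crossing pair" property through the limit, and then parametrize the chain by a strictly increasing one‑dimensional projection — you use the midpoint $(a+b)/2$ where the paper uses the sum $a+b$, which is the same device — followed by linear interpolation across the gaps. Where you genuinely diverge is the converse. The paper argues directly that the sampled intervals on the curve $\gam$ give a \emph{proper} interval representation after a small perturbation of endpoints (to break ties on horizontal or vertical stretches of $\gam$), and concludes that $G(n,\mu)$ is a proper, hence unit, interval graph. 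You instead verify combinatorially that a family of intervals lying on a monotone curve can contain no induced $K_{1,3}$ (the case analysis on $T_v$ relative to $T_2$ is correct, using that on a chain $[a,b]\cap[c,d]\neq\emptyset$ with $a\le c$, $b\le d$ iff $c\le b$), and then invoke the Roberts characterization of unit interval graphs as claw‑free interval graphs — the same characterization the paper itself records immediately after this theorem and uses for \refT{Tuig2}. Your route buys a cleaner, perturbation‑free converse at the price of importing Roberts' theorem; the paper's route is self‑contained modulo an "easily seen" perturbation step. One small point worth flagging in either treatment: making the $2n$ endpoints distinct at the start of the forward direction requires a perturbation that does not destroy edges realized by touching unit intervals ($|x_i-x_j|=1$); this is standard for finite unit interval representations but is glossed over at the same level of detail in both your write‑up and the paper.
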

\begin{proof}
Suppose that $G_n$ is a sequence of unit interval graphs with $G_n\to\gG$.
In the proof of \refT{TI}, the interval representations are modified by
homeomorphisms, and the results are, of course, not unit interval
representations, but they are proper interval representations, \ie, no
interval is a subinterval of another. Thus, the measures $\mu_n$ have the
property that for each $(a,b)\in\sss$, 
$\mu_n\bigpar{[0,a)\times(b,1]}\cdot\mu_n\bigpar{(a,1]\times[0,b)}=0$.
Since $\mu_n\to\mu$ (for a subsequence), the same holds for $\mu$, which
implies that if $a_1<a_2$ and $b_1>b_2$, then $(a_1,b_1)$ and $(a_2,b_2)$
cannot both belong to $\supp\mu$.
(Choose $a=(a_1+a_2)/2$ and $b=(b_1+b_2)/2$.)

Let $E=\set{a+b:(a,b)\in\supp\mu}$. Then $E$ is a closed subset of $[0,2]$
and for each $t\in E$ there is exactly one $(a,b)\in\supp\mu$ with $a+b=t$;
we define $f(t)=a$ and $g(t)=b$ so $f$ and $g$ are functions $E\to\oi$.
Note that $f(t)+g(t)=t$.
If $t_1<t_2$ and $g(t_1)>g(t_2)$, then $f(t_1)<f(t_2)$; thus
$(f(t_1),g(t_1))$ and $(f(t_2),g(t_2))$ are two points in $\supp\mu$
violating the condition above. Consequently, if $t_1<t_2$ then $g(t_1)\le
g(t_2)$, and similarly $f(t_1)\le f(t_2)$. (Since $f(t)+g(t)=t$ this further
implies $f(t_2)-f(t_1)\le t_2-t_1$ and $g(t_2)-g(t_1)\le t_2-t_1$.)
We may now extend $f$ and $g$ to the complement $\oi\setminus E$, \eg{}
linearly in each component, and define $\gam(t)=(f(t),g(t))$.

For the converse, consider the random graph $G(n,\mu)$. This is an interval
graph represented by intervals $I_1,\dots,I_n\in \sss$ that lie on the curve
$\gam$. This is not necessarily a proper interval representation, since two
of the intervals may lie on the same horizontal or vertical part of $\gam$,  
but it is easily seen that it is always possible to obtain a proper interval
representation of the same graph by moving some of the endpoints a
little. Thus $G(n,\mu)$ is a proper interval graph, and thus a unit interval
graph, whence $\gG_\mu$ is a unit interval graph limit.
\end{proof}
Again, the representation by such a measure $\mu$ is not unique. 

\begin{problem}
Is it
possible to make a canonical choice in some way? Is it possible to use a
fixed curve $\gam$?  
\end{problem}

\begin{remark}
$\gG_\mu$ may happen to be a unit interval graph limit also
if $\mu$ is not of the type in \refT{Tuig}; for example if $\mu$ is any
measure supported on $[0,\frac12]\times[\frac12,1]$ when each $G(n,\mu)$ is
the complete graph $K_n$. To characterize all measures $\mu\in\ps$ such that
$\gG_\mu$ is a unit interval graph limit is a different, and open,  problem.  
\end{remark}

The unit interval graphs can also be characterized as the intervals graphs $G$
that do not contain $K_{1,3}$ as an induced subgraph
\cite{Brand,Golumbic,Roberts}. 
In general, for two graphs $F$ and $G$ with $|F|\le|G|$, let $\tind(F,G)$ be
the probability that the induced subgraph of $G$ obtained by selecting $|F|$
vertices uniformly at random is isomorphic to $F$; this number is closely
connected to $t(F,G)$ defined in \refSS{SS2.4} (which loosely speaking
counts subgraphs of $G$ and not just induced subgraphs), see \cite{BCLSVi},
\cite{LSz} 
or \cite{DJ} for details. 
For any fixed $F$, $\tind(F,\cdot)$ extends to graph limits $\gG$
and we have $\tind(F,G_n)\to\tind(F,\gG)$ if $G_n\to\gG$; moreover,
$\tind(F,\gG)$ is a continuous function of $\gG$.
Using this notation, $G$ is a unit interval graph if and only if $G$ is an
interval graph with $\tind(K_{1,3},G)=0$. 

\begin{theorem}\label{Tuig2}
  Let $\gG$ be a graph limit.
Then the following are equivalent:
\begin{romenumerate}
\item 
$\gG$ is a  unit interval graph limit.
\item 
$\gG$ is an interval graph limit and $\tind(K_{1,3},\gG)=0$.
\item
The random graphs $G(n,\gG)$ are unit interval graphs.
\end{romenumerate}
\end{theorem}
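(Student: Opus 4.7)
The plan is to prove the cycle (i)$\Rightarrow$(ii)$\Rightarrow$(iii)$\Rightarrow$(i), using the characterization that a unit interval graph is precisely a $K_{1,3}$-free interval graph together with the existence (Theorem~\ref{TI}) and sampling results we already have in hand.

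For (i)$\Rightarrow$(ii), suppose $H_n\to\gG$ with each $H_n$ a unit interval graph. Then each $H_n$ is an interval graph, so $\gG\in\cioo$. Since a unit interval graph has no induced $K_{1,3}$, we have $\tind(K_{1,3},H_n)=0$ for every $n$, and continuity of $\tind(K_{1,3},\cdot)$ on $\cuq$ gives $\tind(K_{1,3},\gG)=0$.

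For (ii)$\Rightarrow$(iii), use Theorem~\ref{TI} to write $\gG=\gG_\mu$ for some $\mu\in\ps$, so that $G(n,\gG)\eqd G(n,\mu)$ (this identification is justified by Theorem~\ref{TU}\ref{TUgg}$\Rightarrow$\ref{TUgn}). By construction, $G(n,\mu)$ is an interval graph represented by the sampled intervals $X_1,\dots,X_n$, hence every realization is an interval graph. For the $K_{1,3}$-freeness, average the identity
\begin{equation*}
\E\,\tind(K_{1,3},G(n,\mu))=\tind(K_{1,3},\gG_\mu)=0,
\end{equation*}
which holds because a uniformly random injection $[4]\to[n]$ applied to i.i.d.\ samples has the same distribution as $(X_1,\dots,X_4)$; since $\tind\ge 0$, we conclude $\tind(K_{1,3},G(n,\mu))=0$ almost surely. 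An interval graph with no induced $K_{1,3}$ is a unit interval graph, so $G(n,\gG)$ is a.s.\ a unit interval graph.

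For (iii)$\Rightarrow$(i), recall from Section~\ref{SS2.4} that $G(n,\gG)\to\gG$ almost surely as $\ntoo$. If $G(n,\gG)$ is a.s.\ a unit interval graph for each $n$, then by intersecting the countably many almost-sure events we obtain a single realization that is a sequence of unit interval graphs converging to $\gG$, exhibiting $\gG$ as a unit interval graph limit.

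The only nontrivial step is the claim $\E\,\tind(K_{1,3},G(n,\mu))=\tind(K_{1,3},\gG_\mu)$ used in (ii)$\Rightarrow$(iii); once one writes out the definition of $\tind$ as a probability over uniformly random injective vertex maps and notes that i.i.d.\ sampling is exchangeable, this reduces to the defining formula \eqref{t} (or its induced-subgraph analogue), so there is no real obstacle. Everything else is a direct application of the already-established machinery.
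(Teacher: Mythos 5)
Your proposal is correct and follows essentially the same route as the paper: (i)$\Rightarrow$(ii) via continuity of $\tind(K_{1,3},\cdot)$, (ii)$\Rightarrow$(iii) via the induced-subgraph analogue of \eqref{t} giving $\E\,\tind(K_{1,3},G(n,\gG))=\tind(K_{1,3},\gG)=0$ and hence a.s.\ vanishing, and (iii)$\Rightarrow$(i) via $G(n,\gG)\to\gG$ a.s. The only addition you make is spelling out the exchangeability justification of the expectation identity, which the paper leaves as ``it follows easily.''
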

\begin{proof}
  (i)$\implies$(ii) is clear by the comments above.

  (ii)$\implies$(iii). Use \refT{TI} and choose a measure $\mu\in\cP(\sss)$
  representing $\gG$. There is a formula analoguous to \eqref{t} for
  $\tind(F,\gG)$, with $\prod_{ij\in E(F)}W(x_i,x_j)$  replaced by 
$\prod_{ij\in E(F)}W(x_i,x_j)\prod_{ij\notin E(F)}(1-W(x_i,x_j))$,
and it follows easily that for any $n\ge|F|$,
$$
\E\tind\bigpar{F,G(n,\gG)}
=
\E\tind\bigpar{F,G(|F|,\gG)}
=\tind(F,\gG).
$$ 
Hence, (ii) implies that $G(n,\gG)$ a.s.\ is an interval graph $G$
with $\tind(K_{1,3},G)=0$, \ie, a unit interval graph. (The case $n<4$ is
trivial.) 

  (iii)$\implies$(i) follows since $G(n,\gG)\to\gG$ a.s.
\end{proof}

Finally, we mention that a related characterization of unit interval graphs
is that they are the graphs that contain no induced subgraph isomorphic to
$C_k$ for any  $k\ge4$, $K_{1,3}$, $S_3$ or $\overline S_3$, where $S_3$ is
the graph on 6 vertices \set{1,\dots,6} with edge set
$\set{12,13,23,14,25,36}$, and $\overline S_3$ is its complement
\cite{Brand}. 
The same argument as in the proof of \refT{Tuig2} yields
(see \cite[Theorem 3.2]{DHJ} for a more general result):
\begin{theorem}\label{Tuig3}
  A graph limit\/ $\gG$ is a unit interval graph limit if and only if
$\tind(F,\gG)=0$ for every  
$F\in\set{C_k}_{k\ge4}\cup\set{K_{1,3},S_3,\overline S_3}$.
\nopf
\end{theorem}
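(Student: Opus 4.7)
The plan is to mimic the proof of \refT{Tuig2}, replacing the single forbidden subgraph $K_{1,3}$ by the full family $\cF\=\set{C_k}_{k\ge4}\cup\set{K_{1,3},S_3,\overline S_3}$ furnished by the forbidden induced subgraph characterization of \cite{Brand}. The two key tools are the continuity of each $\tind(F,\cdot)$ on $\cuq$ and the identity $\E\tind(F,G(n,\gG))=\tind(F,\gG)$ for $n\ge|F|$, both recalled just before \refT{Tuig2}.

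For the forward direction I would take any sequence of unit interval graphs $G_n\to\gG$. Since each $G_n$ avoids every $F\in\cF$ as an induced subgraph, $\tind(F,G_n)=0$ once $|G_n|\ge|F|$. Passing to the limit using continuity of $\tind(F,\cdot)$ yields $\tind(F,\gG)=0$ for every $F\in\cF$.

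For the converse, assume $\tind(F,\gG)=0$ for all $F\in\cF$. Arguing exactly as in the proof of \refT{Tuig2}, for every $n\ge|F|$ we have $\E\tind(F,G(n,\gG))=\tind(F,\gG)=0$, so $\tind(F,G(n,\gG))=0$ almost surely. For each fixed $n$ only finitely many members of $\cF$ have at most $n$ vertices, so intersecting these finitely many almost sure events shows that almost surely $G(n,\gG)$ avoids \emph{every} forbidden induced subgraph simultaneously; by the characterization of \cite{Brand}, $G(n,\gG)$ is then almost surely a unit interval graph. Taking a further countable intersection over $n$ preserves full probability. Since $G(n,\gG)\to\gG$ almost surely as \ntoo, we may select a single sample realization in which $G(n,\gG)$ is a unit interval graph for every $n$ and $G(n,\gG)\to\gG$; hence $\gG$ is by definition a unit interval graph limit.

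There is no genuine obstacle; the only point requiring a moment of care is the infiniteness of $\cF$, handled by noting that only finitely many members of $\cF$ can appear as induced subgraphs of an $n$-vertex graph, and that a countable intersection of almost sure events is almost sure.
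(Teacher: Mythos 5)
Your proposal is correct and is precisely the argument the paper intends: the theorem is stated with the remark that ``the same argument as in the proof of Theorem~\ref{Tuig2} yields'' it, and you carry out exactly that argument, using continuity of $\tind(F,\cdot)$ for the forward direction and the identity $\E\tind(F,G(n,\gG))=\tind(F,\gG)$ together with the forbidden-induced-subgraph characterization from \cite{Brand} for the converse. Your extra care about the infinitude of the forbidden family (only finitely many members fit in an $n$-vertex graph, then a countable intersection over $n$) is a point the paper leaves implicit, and it is handled correctly.
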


\begin{acks}
Part of this research was done during the
2007 Conference on Analysis of Algorithms (AofA'07) in
Juan-les-Pins, France,
and in a bus back to Nice after the conference.
\end{acks}

\newcommand\AAP{\emph{Adv. Appl. Probab.} }
\newcommand\JAP{\emph{J. Appl. Probab.} }
\newcommand\JAMS{\emph{J. \AMS} }
\newcommand\MAMS{\emph{Memoirs \AMS} }
\newcommand\PAMS{\emph{Proc. \AMS} }
\newcommand\TAMS{\emph{Trans. \AMS} }
\newcommand\AnnMS{\emph{Ann. Math. Statist.} }
\newcommand\AnnPr{\emph{Ann. Probab.} }
\newcommand\CPC{\emph{Combin. Probab. Comput.} }
\newcommand\JMAA{\emph{J. Math. Anal. Appl.} }
\newcommand\RSA{\emph{Random Struct. Alg.} }
\newcommand\ZW{\emph{Z. Wahrsch. Verw. Gebiete} }
\newcommand\DMTCS{\jour{Discr. Math. Theor. Comput. Sci.} }

\newcommand\AMS{Amer. Math. Soc.}
\newcommand\Springer{Springer-Verlag}
\newcommand\Wiley{Wiley}

\newcommand\vol{\textbf}
\newcommand\jour{\emph}
\newcommand\book{\emph}
\newcommand\inbook{\emph}
\def\no#1#2,{\unskip#2, no. #1,} 
\newcommand\toappear{\unskip, to appear}

\newcommand\webcite[1]{\url{#1}}
\newcommand\webcitesvante{\webcite{http://www.math.uu.se/~svante/papers/}}
\newcommand\arxiv[1]{\webcite{http://arxiv.org/#1}}

\def\nobibitem#1\par{}

\end{document}